\documentclass[12pt]{amsart}

\title{}
\author{}

\usepackage[all]{xy}
\usepackage{fullpage, amsfonts, amsmath, amscd, amsthm,stmaryrd}
\usepackage{amssymb}
\usepackage{graphics}
\usepackage{graphicx}
\usepackage{amscd}

\newtheorem{theorem}{Theorem}[section]
\newtheorem{lemma}[theorem]{Lemma}
\newtheorem{definition}[theorem]{Definition}
\newtheorem{corollary}[theorem]{Corollary}
\newtheorem{proposition}[theorem]{Proposition}
\newtheorem{conjecture}[theorem]{Conjecture}
\newtheorem*{theorem*}{Theorem}

\theoremstyle{definition}

\newcommand{\CF}{\widehat{CF}}
\newcommand{\HF}{\widehat{HF}}
\newcommand{\HFK}{\widehat{HFK}}
\newcommand{\on}{\operatorname}

\newcommand{\F}{\mathcal{F}}

\newcommand{\C}{\mathbb{C}}
\newcommand{\X}{\mathbb{X}}
\newcommand{\B}{\mathbb{B}}
\newcommand{\A}{\mathbb{A}}
\newcommand{\Z}{\mathbb{Z}}

\newcommand\goth[1]{\mathfrak{#1}}
\newcommand{\s}{\goth{s}}
\newcommand{\fraks}{\mathfrak{s}}

\newcommand{\rk}{\on{rk}}

\newcommand{\sig}{\text{sig}\,}

\newcommand{\spinc}{${\rm Spin}^c$ }

\title{Cosmetic surgery in integral homology L-spaces}
\author{Zhongtao Wu}

\begin{document}

\maketitle

\begin{abstract}
Let $K$ be a non-trivial knot in $S^3$, and let $r$ and $r'$ be two distinct rational numbers of same sign, allowing $r$ to be infinite; we prove that there is no orientation-preserving homeomorphism between the manifolds $S^3_r(K)$ and $S^3_{r'}(K)$.
We further generalize this uniqueness result to knots in arbitrary integral homology L-spaces.     

\end{abstract}

\section{Introduction}

It has been known for a long time that every closed connected orientable three-manifold is obtained by surgery on a link in $S^3$. 
However, a classification of such three-manifold in terms of this surgery construction has remained elusive, due primarily to the lack of uniqueness of the surgery description.  

In particular, let $K$ be a framed knot in a closed oriented three-manifold $Y$. For a rational number $r$, let $Y_r(K)$ be the manifold obtained by Dehn surgery along $K$ with slope $r$.  Two surgeries along $K$ with distinct slopes $r$ and $r'$ are called \textit{equivalent} if there exists an orientation preserving homeomorphism of the complement of $K$ taking one slope to the other;  and they are called \textit{truly cosmetic} if there exists an orientation preserving homeomorphism between $Y_r(K)$ and $Y_{r'}(K)$.


When $K=U$ is the unknot in $S^3$, there are truly cosmetic surgeries:  $S^3_{p/q}(U)\cong S^3_{p/p+q}(U)$, and $S^3_{p/q_1} \cong S^3_{p/q_2}(U)$ when $q_1q_2\equiv 1 \,(\text{mod} \, p).$  While $p/q$ and $p/(p+q)$ are equivalent slopes, $p/q_1$ and $p/q_2$ as above are usually not.  By contrast, there are no known truly cosmetic surgeries on a nontrivial knot.  Indeed, it is one of the outstanding problems conjectured in Kirby's problem list, Problem 1.81(1):

\begin{conjecture}[Cosmetic Surgery Conjecture \cite{Kirby}]
Two surgeries on a nontrivial knot with nonequivalent slopes are never truly cosmetic.
\end{conjecture}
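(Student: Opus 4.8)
The plan is to extract enough Heegaard Floer data from an orientation-preserving homeomorphism $S^3_r(K)\cong S^3_{r'}(K)$ to force $r=r'$, splitting into the same-sign and opposite-sign regimes. Writing $r=p/q$ in lowest terms with $p>0$, the correction terms $d(S^3_{p/q}(K),i)$, indexed over $i\in\Z/p\Z$, are computed from the nonnegative integers $V_i=V_i(K)$ extracted from $CFK^\infty(K)$ via the rational surgery formula, and $S^3_{p/q}(U)=L(p,q)$ supplies the baseline $d(L(p,q),\cdot)$. An orientation-preserving homeomorphism preserves the unordered set of correction terms together with the linking form. First I would show that for a nontrivial knot this system admits no solution with $r,r'$ of the same sign: the lens-space terms $d(L(p,q),\cdot)$ and $d(L(p,q'),\cdot)$ already disagree unless $q'\equiv q^{\pm1}\pmod p$, and the monotone knot corrections $-2V_\bullet$ are then too rigid to reconcile two distinct same-sign slopes. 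This is the regime the present paper resolves, and I expect it to go through cleanly, simultaneously pinning down $r'=-r$ together with the arithmetic constraint $q^2\equiv-1\pmod p$.

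The substantive case is $r'=-r$, and here I would bring two orientation-preserving invariants to bear at once. The Casson--Walker surgery formula gives
\[
\lambda\big(S^3_{p/q}(K)\big)=\lambda\big(L(p,q)\big)+\frac{q}{2p}\,\Delta_K''(1),
\qquad
\lambda\big(S^3_{-p/q}(K)\big)=\lambda\big(L(p,-q)\big)-\frac{q}{2p}\,\Delta_K''(1).
\]
Under $q^2\equiv-1\pmod p$ one has $q^{-1}\equiv-q$, so $L(p,q)\cong L(p,-q)$ orientation-preservingly and the lens-space terms agree. Since an orientation-preserving homeomorphism preserves $\lambda$, subtracting the two formulas yields $\tfrac{q}{p}\Delta_K''(1)=0$, hence $\Delta_K''(1)=0$. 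Thus the opposite-sign problem reduces to knots with vanishing second Alexander derivative, which I would then attack with Floer data alone.

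The main obstacle is precisely this residual family with $\Delta_K''(1)=0$. Using $S^3_{-p/q}(K)=-S^3_{p/q}(m(K))$ and $d(-Y,\cdot)=-d(Y,\cdot)$, matching correction terms becomes a comparison between the $V_i(K)$ and the dual sequence of $m(K)$, governed by the symmetry $H_i=V_{-i}$; the natural invariants therefore tend to occur in symmetric pairs that cancel rather than obstruct. The $d$-invariant equalities do force rigid conditions on $V_0,V_1,\dots$ (bounding $\tau(K)$ and hence the genus), narrowing the problem to a constrained family, but I do not expect the numerical correction terms alone to finish it. To break the symmetry I would promote the comparison to the full knot-Floer input via the immersed-curve reformulation of $CFK^\infty$, in which a cosmetic pair becomes a tangency/self-intersection constraint on a single immersed multicurve in the punctured torus. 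The hard part, and the genuine crux of the whole argument, is proving the requisite rigidity lemma: that no nontrivial admissible multicurve satisfies this constraint for any valid $(p,q)$ with $q^2\equiv-1\pmod p$. The reductions above are essentially formal; it is this multicurve rigidity where the real work lies.
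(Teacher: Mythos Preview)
The statement you are attempting is the full Cosmetic Surgery Conjecture, which the paper does \emph{not} prove; it is recorded there as an open problem, and the paper establishes only the same-sign case (Theorems~\ref{main} and~\ref{L}). Your proposal is likewise not a proof: you explicitly defer the crux of the opposite-sign case to an unproved ``rigidity lemma'' for immersed multicurves, saying that ``it is this multicurve rigidity where the real work lies.'' So there is no proof in the paper to compare against, and your proposal has a genuine gap precisely where the conjecture remains open.

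For the same-sign regime, which the paper does settle, your sketched route via the correction terms $d(S^3_{p/q}(K),i)$ and the integers $V_i$ is different from the paper's argument and is not justified as written. The paper never invokes $d$-invariants. Its mechanism is: (i) the Casson--Walker and Casson--Gordon surgery formulae together force $Y_{p/q}(K)\not\cong Y_{p/q'}(K)$ for all $q\neq q'$ whenever $\Delta_K''(1)\neq 0$; and (ii) the rank formula for $\HF$ (Proposition~\ref{rank}) shows that if $q,q'$ have the same sign and the surgeries are homeomorphic then $\rk H_*(\hat A_s)=1$ for every $s$, so $K$ admits an $L$-space surgery, whence by Corollary~\ref{StructAlex} one has $\Delta_K''(1)=2\sum_j(-1)^{k-j}n_j^2\neq 0$ unless $\Delta_K\equiv 1$, in which case knot Floer homology detects the unknot. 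Your assertion that the $d$-invariant comparison ``goes through cleanly'' and even ``simultaneously pins down $r'=-r$ with $q^2\equiv -1\pmod p$'' is not substantiated; you have not explained, for instance, why monotonicity of the $V_i$ excludes the residual case $q'\equiv q^{-1}\pmod p$ of the same sign, nor how any of this yields the claimed opposite-sign conclusion.
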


  
Lackenby \cite{L} has showed that under general conditions on a null-homotopic knot $K\subset Y$, there are at most finitely many truly cosmetic surgeries:  Consider two surgered manifolds $Y_{p/q}(K)$ and $Y'_{p'/q'}(K')$.  If $Y$ and $Y'$ are distinct, or $K$ and $K'$ are distinct, or $p/q\neq p'/q'$, then $Y_{p/q}(K) \ncong Y'_{p'/q'}(K')$ for a sufficiently large $|q|$.  

Techniques from Heegaard Floer homology were first introduced into the study of cosmetic surgery by Ozsv\'ath and Szab\'o in \cite {OSzRatSurg}.   Combining number theoretical methods, Wang was able to prove the conjecture for genus one knots in $S^3$ \cite{W}.  Ni, in another direction, used a twisted version of the Heegaard Floer homology to prove the case when K is a null-homologous knot in a closed three-manifold $Y$ that contains a non-separating sphere \cite{N}.  

Inspired by the work of Boyer and Lines \cite{BL}, we are able to prove the following uniqueness result. 

\begin{theorem} \label{main}
Let $r$ and $r'$ be two distinct rational numbers with $rr'>0$, and let $K$ be a non-trivial knot in $S^3$; then $S^3_r(K) \ncong S^3_{r'}(K)$.  
 
\end{theorem}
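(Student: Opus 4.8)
The plan is to play two invariants of an oriented rational homology sphere off against each other, both preserved under orientation-preserving homeomorphisms: the Casson--Walker invariant $\lambda$, which (following Boyer--Lines) will dispose of every knot with $\Delta_K''(1)\neq 0$, and Heegaard Floer homology, which will take care of the degenerate case $\Delta_K''(1)=0$. First the reductions. Replacing $K$ by its mirror if necessary, assume $r,r'>0$. An orientation-preserving homeomorphism $S^3_r(K)\cong S^3_{r'}(K)$ is an isomorphism on $H_1$, and $|H_1(S^3_{p/q}(K))|=p$, so the two slopes have the same numerator; write $r=p/q$, $r'=p/q'$ with $p,q,q'>0$, $\gcd(p,q)=\gcd(p,q')=1$, and $q\neq q'$. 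The homeomorphism also carries the linking form of one side to that of the other; since the linking form of $S^3_{p/q}(K)$ on $\Z/p$ is represented by $\pm q/p$, this forces $qq'$ to be a square modulo $p$, i.e.\ $q'\equiv n^{2}q\pmod p$ for some unit $n$.

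\emph{Casson--Walker.} The surgery formula for $\lambda$ (Boyer--Lines) has the shape
$$\lambda\bigl(S^3_{p/q}(K)\bigr)=\lambda\bigl(S^3_{p/q}(U)\bigr)+\frac{q}{p}\cdot\frac{\Delta_K''(1)}{2},$$
with $\lambda(S^3_{p/q}(U))=\lambda(L(p,q))$ a Dedekind-sum expression depending only on $q\bmod p$. Equating the right-hand sides for $q$ and $q'$ gives
$$\frac{q-q'}{p}\cdot\frac{\Delta_K''(1)}{2}=\lambda(L(p,q'))-\lambda(L(p,q)).$$
Feeding in Dedekind-sum reciprocity together with the square-class constraint $q'\equiv n^{2}q\pmod p$, one argues, as in Boyer--Lines, that this equality cannot hold unless $\Delta_K''(1)=0$. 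This is also where the hypothesis $rr'>0$ is used: it permits the reduction to positive slopes, for which the surgery formulas below take their cleanest form. So the theorem reduces to the case $\Delta_K''(1)=0$.

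\emph{Heegaard Floer.} Assume now $\Delta_K''(1)=0$. I would invoke the rational surgery formula of Ozsv\'ath--Szab\'o together with the Ni--Wu expression for the correction terms: for $0\le i<p$,
$$d\bigl(S^3_{p/q}(K),i\bigr)=d\bigl(S^3_{p/q}(U),i\bigr)-2\max\{V_{\lfloor i/q\rfloor},\,V_{\lceil (p-i)/q\rceil}\},$$
where $\{V_s\}_{s\ge 0}$ is the non-increasing sequence of non-negative integers read off from $CFK^\infty(K)$, with $V_s=0$ for $s\ge g(K)$. If the multisets of correction terms of $S^3_{p/q}(K)$ and $S^3_{p/q'}(K)$ agree under the $H_1$-identification, the behaviour of the correction terms of lens spaces forces all $V_s=0$, so that $S^3_{p/q}(K)$ and $S^3_{p/q}(U)$ share all correction terms for every $q$. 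To finish I would use the finer content of knot Floer homology --- the rank of $\HFK(K)$, equivalently the genus detection of $HF^+$ of $0$-surgery --- which, for a nontrivial knot, makes $HF^+_{\mathrm{red}}(S^3_{p/q}(K))$ genuinely depend on $q$; since $q\neq q'$, this contradicts the homeomorphism, so $K$ must be the unknot.

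The main obstacle is this last step. The vanishing $\Delta_K''(1)=0$ occurs for many nontrivial knots --- including knots with $\Delta_K\equiv 1$ --- so neither $\lambda$ nor the correction terms alone can see such a knot, and the contradiction has to be extracted from the full complex $CFK^\infty(K)$ through the surgery formula. Controlling the relevant Floer-theoretic quantity (the rank, or the graded structure, of $HF^+_{\mathrm{red}}(S^3_{p/q}(K))$) uniformly in $p$, $q$, $q'$, and matching it correctly across the homeomorphism, is the delicate part of the argument.
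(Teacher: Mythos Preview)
Your proposal has two genuine gaps, and the paper's route is structurally different from yours in a way worth understanding.

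\medskip
\textbf{The Casson--Walker step.} You claim that the linking-form constraint $q'\equiv n^2 q\pmod p$ together with Dedekind-sum reciprocity lets you deduce $\Delta_K''(1)=0$ from the Casson--Walker surgery formula alone, ``as in Boyer--Lines.'' This is not what Boyer--Lines do, and it does not work: the square-class condition does \emph{not} force $s(q,p)=s(q',p)$ (take $p=5$, $q=1$, $q'=4$). What the paper actually uses---and what Boyer--Lines use---is the total Casson--Gordon invariant $\tau$. Its surgery formula reads $\tau(Y_{p/q}(K))=\tau(L(p,q))-\sigma(K,p)$, and since the knot term $\sigma(K,p)$ is the same on both sides one gets $\tau(L(p,q))=\tau(L(p,q'))$, i.e.\ $-4p\,s(q,p)=-4p\,s(q',p)$, hence $\lambda(L(p,q))=\lambda(L(p,q'))$. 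Only then does the Casson--Walker equation collapse to $(q-q')\Delta_K''(1)=0$. You are missing this second invariant entirely.

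\medskip
\textbf{The Heegaard Floer step.} You correctly identify that $\Delta_K''(1)=0$ is the hard case, but your plan---match correction terms via the Ni--Wu formula, deduce all $V_s=0$, then show $HF^+_{\mathrm{red}}(S^3_{p/q}(K))$ depends on $q$ for nontrivial $K$---is, as you acknowledge, incomplete. The step ``matching $d$-invariants forces $V_s=0$'' is unjustified (the lens-space $d$-invariants for $L(p,q)$ and $L(p,q')$ need not agree Spin$^c$-by-Spin$^c$ under the $H_1$-identification), and the final step is simply not carried out.

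\medskip
\textbf{What the paper does instead.} The paper does not attempt to handle knots with $\Delta_K''(1)=0$ directly. Rather, it uses the $\HF$ \emph{rank} formula: for $q,q'$ of the same sign, $\rk\HF(S^3_{p/q}(K))$ is a strictly monotone affine function of $q$ unless $\rk H_*(\hat A_s)=1$ for every $s$ and the $\nu$-term vanishes. Thus a same-sign cosmetic pair forces $K$ to admit an $L$-space surgery. But $L$-space knots have Alexander polynomial $(-1)^k+\sum_{j=1}^k(-1)^{k-j}(T^{n_j}+T^{-n_j})$ with $0<n_1<\cdots<n_k$, whence $\Delta_K''(1)=2\sum_j(-1)^{k-j}n_j^2\neq 0$ unless $k=0$, i.e.\ $\Delta_K\equiv 1$; and an $L$-space knot with $\Delta_K\equiv 1$ has $\HFK$ concentrated in a single $\Z$, so $K$ is the unknot by genus detection. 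The logic is therefore circular in the good sense: Heegaard Floer does not replace the classical invariants in the degenerate case, it shows the degenerate case cannot occur for a nontrivial knot. This is where the hypothesis $rr'>0$ is genuinely used---in the rank comparison---not, as you suggest, merely to normalize the sign of the slopes.
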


One immediate implication of this theorem is that there are at most TWO surgeries on a given non-trivial knot that can yield orientation preserving homeomorphic manifolds - no result of this type was previously known.  One should also compare it with Gordon and Luecke's \textit{knot complement} theorem \cite{GL}, which claims that $S_r^3(K)\ncong S^3= S^3_\infty(K).$ 


An \textit{integral homology $L$-space} is a three-manifold $Y$ such that $\HF(Y) \cong \Z$.  This is a class of three-manifolds with the simplest $\HF$, whose members include $S^3$, the Poincar\'e sphere $\Sigma(2,3,5)$, the Poincar\'e sphere with opposite orientation $-\Sigma(2,3,5)$, and their connected sums.  Theorem \ref{main} can be in fact generalized to knots in arbitrary integral homology $L$-spaces. 

 \begin{theorem} \label{L}
Let $r$ and $r'$ be two distinct rational numbers with $rr'>0$, and let $K$ be a non-trivial knot in an integral homology $L$-space $Y$; then $Y_r(K) \ncong Y_{r'}(K)$.\\  
 
\end{theorem}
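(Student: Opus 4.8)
(This argument subsumes Theorem~\ref{main}, the case $Y=S^3$: the only property of $S^3$ that will be used is $\HF(S^3)\cong\Z$.) The plan is to combine the Casson--Walker invariant, in the spirit of Boyer--Lines \cite{BL}, with the Ozsv\'ath--Szab\'o rational surgery formula \cite{OSzRatSurg}, which is available here precisely because $\HF(Y)\cong\Z$ makes the knot Floer package of $K\subset Y$ formally identical to that of a knot in $S^3$. Suppose, for contradiction, that $h\colon Y_r(K)\xrightarrow{\ \sim\ }Y_{r'}(K)$ is an orientation-preserving homeomorphism. Since $Y$ is an integral homology sphere and $K$ is null-homologous, writing $r=p/q$, $r'=p'/q'$ in lowest terms with $p,p'\ge1$ gives $H_1(Y_r(K))\cong\Z/p$ and $H_1(Y_{r'}(K))\cong\Z/p'$, so $p=p'$; the hypothesis $rr'>0$ forces $q,q'$ to have the same sign, and after replacing $(Y,K,r,r')$ by $(-Y,K,-r,-r')$ if necessary — a move preserving all hypotheses — we may assume $p\ge1$, $q>q'\ge1$ and $\gcd(p,q)=\gcd(p,q')=1$. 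Finally, $h$ matches the Reidemeister--Turaev torsions; writing each as the torsion of the common exterior $Y\smallsetminus\nu(K)$ (contributing only the symmetrized Alexander polynomial of $K$, the same for both slopes) times the lens-space factor coming from the filling, agreement of the lens-space factors of $L(p,q)$ and $L(p,q')$ up to the standard ambiguity forces $q'\equiv q^{\pm1}\pmod p$ — a congruence that also follows by comparing, for $|q|$ large, the full sets of correction terms.

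Granting $q'\equiv q^{\pm1}\pmod p$, I would apply the rational surgery formula for the Casson--Walker invariant (as in \cite{BL}),
\[
\lambda\bigl(Y_{p/q}(K)\bigr)=\lambda(Y)+\frac{q}{2p}\,\Delta_K''(1)+\lambda\bigl(L(p,q)\bigr),
\]
and likewise for $q'$, where $\lambda(L(p,q))$ is a Dedekind-sum quantity depending only on $q\bmod p$ and invariant under $q\mapsto q^{-1}$. Hence $\lambda(L(p,q))=\lambda(L(p,q'))$, while $\lambda(Y_{p/q}(K))=\lambda(Y_{p/q'}(K))$ because $h$ preserves orientation; subtracting gives $\tfrac{q-q'}{2p}\,\Delta_K''(1)=0$, so $\Delta_K''(1)=0$ as $q\ne q'$. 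When $\Delta_K''(1)\ne0$ this is already the desired contradiction — essentially the Casson--Walker argument going back to Boyer--Lines — so it remains to treat $\Delta_K''(1)=0$.

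Here the Heegaard Floer input is essential. Since $\HF(Y)\cong\Z$, the complex $CFK^\infty(Y,K)$ is a finitely generated free $\Z[U,U^{-1}]$-complex with an Alexander filtration whose vertical and horizontal homologies are $\Z[U,U^{-1}]$; it thus determines the non-increasing sequence $V_0(K)\ge V_1(K)\ge\cdots\ge0$ of integers with $V_j(K)=0$ for $j\ge g(K)$, and by the rational surgery formula \cite{OSzRatSurg} the homology $\on{HF}^+\bigl(Y_{p/q}(K)\bigr)$ is the homology of an explicit mapping cone built from $CFK^\infty(Y,K)$; in particular, for a suitable affine labeling of $\Spinc$ structures by $\Z/p$,
\[
d\bigl(Y_{p/q}(K),i\bigr)=d(Y)+d\bigl(L(p,q),i\bigr)-2\max\bigl\{V_{\lfloor i/q\rfloor}(K),\,V_{\lceil(p-i)/q\rceil}(K)\bigr\},\qquad 0\le i\le p-1.
\]
The homeomorphism $h$ gives an affine, linking-form-compatible bijection of $\Z/p$ matching these correction terms; combining this with $q'\equiv q^{\pm1}\pmod p$ (which matches the lens-space terms after relabeling) and with the resulting isomorphism of mapping cones produces a heavily overdetermined system of equations in the invariants of $CFK^\infty(Y,K)$ evaluated at the two distinct ``rates'' $q\ne q'$.

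The main obstacle is to show that, once $\Delta_K''(1)=0$, this system has no solution unless $g(K)=0$, i.e.\ unless $\HFK(Y,K)\cong\Z$ and hence — since knot Floer homology detects the Seifert genus of null-homologous knots in integral homology spheres — $K$ is trivial, contrary to hypothesis. I expect this to require a partly case-by-case analysis, with small $p,q,q'$ treated separately: on one side, the monotonicity and eventual vanishing of the $V_j(K)$, the Dedekind-sum identities satisfied by $\sum_i d(L(p,q),i)$, and the relation $\Delta_K''(1)=0$ already in hand should pin down the $V_j(K)$; on the other, the rank of $\on{HF}^+_{\mathrm{red}}\bigl(Y_{p/q}(K)\bigr)$, read off from the mapping cone, varies with $q$ in a controlled way for a non-trivial knot, and it is the impossibility of reconciling these two pieces of data when $q\ne q'$ that should yield the contradiction.
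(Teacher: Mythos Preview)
Your argument splits into the same two halves as the paper's --- a Casson--Walker step reducing to $\Delta_K''(1)=0$, and a Heegaard Floer step for that case --- but the second half is left as speculation, and that is precisely where the paper's decisive idea lies.

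On the first half, the paper takes a different and cleaner route than your torsion argument: it never establishes $q'\equiv q^{\pm1}\pmod p$. Instead it invokes the surgery formula for the \emph{total Casson--Gordon invariant}, $\tau(Y_{p/q}(K))=\tau(L(p,q))-\sigma(K,p)$, which gives $\tau(L(p,q))=\tau(L(p,q'))$ immediately; since $\tau(L(p,q))=-4p\,s(q,p)$ this yields $s(q,p)=s(q',p)$, hence $\lambda(L(p,q))=\lambda(L(p,q'))$, and then the Casson--Walker formula forces $(q-q')\Delta_K''(1)=0$. Your torsion product decomposition and the extraction of $q\bmod p$ from it are asserted rather than justified, though something in that spirit may be salvageable.

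The genuine gap is the second half. You propose an open-ended comparison of $d$-invariants and the $V_j$'s and explicitly defer to ``a partly case-by-case analysis''; nothing is actually proved. The paper's argument here is a single stroke using the \emph{total rank} of $\HF$, not correction terms: by Proposition~\ref{rank}, $\rk\HF(Y_{p/q}(K))$ equals $|p|$ plus a nonnegative quantity that grows with $|q|$ (the term $q\sum_s(\rk H_*(\hat{A}_s)-1)$ together with the $\nu$-term). Equality for two distinct same-sign values of $q$ forces that quantity to vanish, so $\rk H_*(\hat{A}_s)=1$ for every $s$ and $Y_{p/q}(K)$ is an $L$-space. Thus $K$ admits an $L$-space surgery, and by Corollary~\ref{StructAlex} its Alexander polynomial is $(-1)^k+\sum_{j=1}^k(-1)^{k-j}(T^{n_j}+T^{-n_j})$ with $0<n_1<\cdots<n_k$, giving $\Delta_K''(1)=2\sum_{j=1}^k(-1)^{k-j}n_j^2>0$ unless $k=0$. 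When $k=0$, Proposition~\ref{thm:FloerHomology} forces $\HFK(K)\cong\Z$ supported in Alexander grading $0$, so $K$ is the unknot by \cite{Ni}. No $d$-invariant matching and no case analysis are needed; the point you are missing is that same-sign cosmetic surgeries already force $K$ to be an $L$-space knot, which is incompatible with $\Delta_K''(1)=0$ for nontrivial $K$.
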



Our proof of the above theorems involves a simultaneous application of \textit{Casson-Walker invariant}, \textit{Casson-Gordon invariant} and Heegaard Floer homology.  This argument can be potentially generalized to knots in Lens spaces and rational homology $L$-spaces, and it is still a work in progress.  Various other partial results and improvements may also be obtained, if we impose additional conditions on the knot $K$, such as its genus, or on the surgery coefficients.  

The paper is organized as follows.  In Section 2, we sketch some backgrounds in Casson-Walker invariant and Casson-Gordon invariant, and present explicit surgery formulae for the two invariants respectively.  In Section 3, we recall some preliminaries in Heegaard Floer homology, and make a few slight generalization on the setup, mostly from $S^3$ to integral homology $L$-space.  And finally in Section 4, we put everything together and prove our main results.     

\subsection*{Acknowledgment.}
I am grateful to my advisor, Zolt\'an  Szab\'o, for suggesting the problem and spending countless time in guiding and advising.  Thanks in addition to Steven Boyer, David Gabai, Joshua Greene, Yi Ni and JiaJun Wang for many helpful discussions at various points.  Finally, I would like to acknowledge Sam Lewallen for sharing the reference \cite{BL}.

\section{Preliminaries in classical three-manifold invariants}
We sketch in this section the definition and basic properties of two classical three-manifold invariants: the Casson-Walker invariant and the Casson-Gordon invariant. 

\subsection{Casson-Walker Invariant}

The Casson invariant is one of the many invariants of a closed three-manifold $Y$ that can be obtained by studying representations of its fundamental group in a certain non-abelian group $G$.  In particular, the Casson invariant of an integral homology sphere $Y$ can be obtained by counting representations of $\pi_1(Y)$ in $G=SU(2)$.  The geometric structures used to obtain a topological invariant is a Heegaard splitting of $Y$ and the symplectic geometry associated with it.  An alternative gauge-theoretical approach uses flat bundles together with a Riemannian metric on $Y$ and leads to a refinement of the Casson invariant, the Floer homology.  

Casson's $SU(2)$ intersection theory was later extended by Walker to include reducible representations, who generalized the invariant to rational homology spheres.  Most remarkably, Walker's invariant admits a purely combinatorial definition in terms of surgery presentations.  The existence and uniqueness of this invariant together with basic properties are given by the following theorem in Walker \cite{Walker}.           
\begin{theorem}
 
There exists a unique invariant $\lambda$, which satisfies the following properties:
\begin{enumerate}
 \item $\lambda$ coincides with Casson's invariant on integral homology sphere.
\item $\lambda (-Y)=-\lambda(Y)$ where $-Y$ stands for $Y$ with reversed orientation.
\item $\lambda(Y_1\#Y_2)=\lambda(Y_1)+\lambda(Y_2)$ for two rational homology spheres $Y_1$ and $Y_2$.
\item The number $12 \cdot |H_1(Y,\Z)|\cdot \lambda(Y)$ is an integer for any rational homology homology sphere.  
\item Let $k$ be a knot in a rational homology sphere $Y$, K its exterior, and $l\in \partial K$ a longitude.  Then, $\lambda$ satisfies the surgery formula
$$\lambda(K_a)=\lambda(K_b)+\tau(a,b;l)+\frac {\langle a,b \rangle}{\langle a,l \rangle\langle b,l \rangle} \cdot \Delta''_K(1).$$
for primitive $a,b \in H_1(\partial K, \Z)$ such that $\langle a,l \rangle \neq 0$ and $\langle b,l \rangle \neq 0$

\end{enumerate}

\end{theorem}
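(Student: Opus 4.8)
The plan is to prove the two assertions of the theorem --- uniqueness and existence --- by rather different means, as is standard for invariants characterized by a surgery formula.

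For \textbf{uniqueness}, the key point is that every rational homology sphere $Y$ can be built from $S^3$ by a finite sequence of Dehn surgeries on knots, each step performed in the rational homology sphere produced by the previous step. Property (1) pins down $\lambda(S^3)=0$ (Casson's invariant of $S^3$ vanishes, as $\pi_1$ is trivial), so it suffices to show that the surgery formula (5), together with (2)--(3), determines the change of $\lambda$ along each such step. This falls out once the notation is unwound: for a knot $k$ in a rational homology sphere with exterior $K$ and longitude $l$, one takes $a$ to be the meridian --- so that $K_a$ is the ambient manifold \emph{before} surgery --- and $b$ the desired filling slope; then $\langle a,l\rangle=\pm1$ and $\langle b,l\rangle\ne0$, the term $\tau(a,b;l)$ depends only on the slopes and is assembled from Dedekind sums, hence is a priori computable, and $\frac{\langle a,b\rangle}{\langle a,l\rangle\langle b,l\rangle}\Delta''_k(1)$ involves only the symmetrized, normalized Alexander polynomial of $k$, a classical invariant determined by the surgery data. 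Feeding a surgery presentation of $Y$ into this recursion therefore forces the value of $\lambda(Y)$, so any two invariants satisfying (1)--(5) coincide.

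For \textbf{existence}, there are two natural routes. The first is combinatorial: one \emph{defines} $\lambda(Y)$ by the recursion above applied to a chosen surgery presentation of $Y$, and then proves the answer is independent of the presentation. By Kirby's theorem (with the rational-surgery refinements of Fenn--Rourke and Rolfsen), any two surgery presentations of $Y$ are connected by a finite sequence of handle slides and blow-ups/blow-downs of $\pm1$-framed unknots, so it is enough to check that the proposed value is unchanged under each such move; this reduces to a bookkeeping exercise with signatures of linking matrices and the reciprocity law for Dedekind sums. The second route follows Casson's original construction: one realizes $\lambda(Y)$ as one-half a signed count of the $SU(2)$ representation variety of $\pi_1(Y)$, with the reducible representations --- which for a rational homology sphere lie over the torsion of $H_1(Y)$ --- perturbed away and weighted appropriately, establishes topological invariance via invariance under Heegaard stabilization (using a presentation of the relevant subgroup of the Torelli group by bounding-pair and separating Dehn twists), and finally derives (5) by analyzing how the representation variety degenerates as the filling slope varies. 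In either approach properties (1)--(4) require comparatively little extra work: (1) is the normalization, (2) is that reversing orientation reverses the sign of the relevant signed count / intersection number, (3) follows from a van Kampen and Mayer--Vietoris analysis of $\pi_1(Y_1\#Y_2)$ and its representation space, and the integrality in (4) is visible from the denominators ($12$ and $|H_1(Y,\Z)|$) built into the Dedekind-sum normalization of $\tau$.

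The \textbf{main obstacle} is the well-definedness at the heart of existence: showing that the surgery recursion (respectively, the signed representation count) does not depend on the auxiliary choices. In the combinatorial picture this is invariance under Kirby moves --- one must track exactly how the $\tau$ and $\Delta''$ correction terms transform under handle slides and how the change in the linking-matrix signature compensates, invoking Dedekind reciprocity at the crucial cancellations; in the gauge-theoretic picture it is invariance under Heegaard stabilization together with the delicate local analysis of reducibles near the torsion points of $H_1(Y)$. Once this consistency is in place, uniqueness, properties (1)--(4), and even the precise shape of $\tau(a,b;l)$ are essentially formal consequences.
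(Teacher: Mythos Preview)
The paper does not prove this theorem at all: it is quoted verbatim as background from Walker's monograph \cite{Walker}, with no argument given. So there is no ``paper's own proof'' against which to compare your proposal.

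That said, your outline is a fair high-level summary of how Walker's theorem is actually established. The uniqueness half is essentially as you describe: any rational homology sphere admits a surgery presentation, property~(1) fixes $\lambda(S^3)=0$, and property~(5) then forces every value inductively. For existence, of the two routes you mention, Walker takes the second (extending Casson's $SU(2)$ intersection-theoretic count to allow reducible representations, and verifying invariance under Heegaard stabilization); the surgery formula~(5) is then \emph{derived} from this geometric definition rather than used as a definition. Your first, purely combinatorial route---define $\lambda$ by the recursion and check Kirby-move invariance directly---is plausible in spirit but is not how Walker proceeds, and carrying it out rigorously (tracking $\tau$ and $\Delta''$ through handle slides and Rolfsen twists, with Dedekind reciprocity doing the cancellations) is substantially harder than you suggest; in practice this is done only \emph{after} existence is known, as a consistency check.

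In short: your sketch is not wrong, but it is only a sketch, and in any case the paper treats this result as a black box.
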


In the surgery formula above, The brackets $\langle \, , \rangle$ denote the intersection pairing $H_1(\partial K,\Z) \otimes H_1(\partial K, \Z) \longrightarrow \Z$, and $\Delta_k''$ stands for the second order derivative of the normalized Alexander polynomial of $k$.  Given a longitude $l$, choose a basis $x,y$ of $H_1(\partial K, \Z)$ such that $\langle x, y \rangle=1$ and $l=dy$ for some $d\in \Z$.  Then
$$\tau(a,b;l):=-s(\langle x,a \rangle, \langle y, a \rangle)+s(\langle x,b \rangle, \langle y, b \rangle)+ \frac {d^2-1}{12}\cdot \frac  {\langle a,b \rangle}{\langle a,l \rangle\langle b,l \rangle}$$
where $s(q,p)$ is the Dedekind sum defined by $$s(q,p):=\text{sign}(p)\cdot \sum^{|q|-1}_{k=1}((\frac{k}{p}))((\frac{kq}{p})),$$
where $$ ((x))=\begin{cases} x-[x]-\frac{1}{2}, & \text{if $x \notin \Z$}, \\
0, & \text{if $x \in \Z$},  \end{cases}$$

\bigskip
The surgery formula is much simplified when applied to null-homologous knots in rational homology spheres.

\begin{proposition} 
 Let $K$ be a null-homologous knot in a rational homology three-sphere $Y$, and let $L(p,q)$ be the lens space obtained by $(p/q)$-surgery on the unknot in $S^3$.  Then
$$\lambda(Y_{p/q}(K))=\lambda(Y)+\lambda(L(p,q))-\frac{q}{p}\Delta_K''(1).$$

\end{proposition}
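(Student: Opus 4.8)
The plan is to derive this as a direct specialization of the general surgery formula in part (5) of Walker's theorem, together with the additivity property (3) applied to a connected sum decomposition. First I would set up the natural basis of $H_1(\partial K, \Z)$ for a null-homologous knot: since $K$ is null-homologous in $Y$, its exterior carries a well-defined longitude $l$, and I choose a meridian $m$ so that $\langle m, l\rangle = 1$. With respect to this basis the $p/q$-surgery slope is $a = pm + ql$ and the original meridian is $b = m$; one checks $\langle a, l\rangle = p$, $\langle b, l\rangle = 1$, and $\langle a, b\rangle = -q$ (or $+q$, depending on orientation conventions, which I would pin down once and carry consistently). Then $Y_{p/q}(K) = K_a$ and $Y = K_b$, so part (5) gives
\[
\lambda(Y_{p/q}(K)) = \lambda(Y) + \tau(a,b;l) + \frac{\langle a,b\rangle}{\langle a,l\rangle \langle b,l\rangle}\,\Delta_K''(1),
\]
and the last term is immediately $\mp \tfrac{q}{p}\Delta_K''(1)$.

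The remaining work is to identify the Dedekind-sum term $\tau(a,b;l)$ with $\lambda(L(p,q))$ up to the contribution already accounted for. Here I would use the longitude basis directly: since $l$ is a genuine longitude, in the notation of the excerpt we have $d = 1$, so the $\tfrac{d^2-1}{12}$ correction term vanishes entirely. Plugging $a = pm+ql$, $b = m$ into the definition of $\tau$ with $x = m$, $y = l$ yields
\[
\tau(a,b;l) = -s(q, p) + s(0, 1) + 0 = -s(q,p),
\]
using $s(0,1) = 0$. The key input is then the known evaluation of the Casson–Walker invariant of a lens space in terms of Dedekind sums, namely $\lambda(L(p,q)) = -s(q,p)$ (with the normalization fixed by $\lambda(S^3) = 0$ and the sign convention of Walker); this itself follows from applying part (5) to the unknot in $S^3$, for which $\Delta_U'' (1) = 0$, so that $\lambda(L(p,q)) = \lambda(S^3) + \tau(a,b;l) = -s(q,p)$. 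Comparing the two computations gives $\tau(a,b;l) = \lambda(L(p,q))$, and substituting into the displayed formula above completes the proof.

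The main obstacle, such as it is, is bookkeeping rather than conceptual: getting all the orientation and sign conventions consistent — the sign of $\langle a, b\rangle$, the sign convention in Walker's $\lambda$ versus Casson's, and the reciprocity/symmetry properties of Dedekind sums needed to match $s(q,p)$ against whatever form naturally appears ($s(q,p)$ versus $s(q', p)$ with $qq' \equiv 1$, say, via Dedekind reciprocity). One should also double-check the edge cases $q = \pm 1$ and the behavior when $Y_{p/q}(K)$ has torsion in $H_1$ matching that of $L(p,q)$, ensuring the integrality clause (4) is respected. I would handle the sign ambiguity by fixing conventions so that the final formula reads exactly as stated, i.e. with coefficient $-q/p$ on $\Delta_K''(1)$, and noting that the lens space surgery formula is the $K = U$ special case, so any global sign error would already be visible there and is thus self-correcting within the chosen normalization.
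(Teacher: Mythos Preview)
The paper does not supply a proof of this proposition; it is stated in the preliminaries as a known specialization of Walker's general surgery formula, alongside the companion fact $\lambda(L(p,q)) = s(q,p)$. Your derivation is the natural one, and the core idea---applying part~(5) twice, once to $K\subset Y$ and once to the unknot $U\subset S^3$, so that the $\tau(a,b;l)$ term is \emph{identified} with $\lambda(L(p,q))$ rather than computed explicitly---is exactly the right way to see why the formula takes this clean form. That comparison works because for a null-homologous knot one has $d=1$ and the same homological data $(a,b,l)$ on the boundary torus in both situations, so the $\tau$ terms literally coincide.

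One small correction to your explicit computation: with $x=m$, $y=l$, $\langle m,l\rangle = 1$, one has $\langle y,a\rangle = \langle l, pm+ql\rangle = -p$, not $p$, so the Dedekind-sum term is $-s(q,-p) = s(q,p)$, matching the paper's normalization $\lambda(L(p,q)) = s(q,p)$ rather than $-s(q,p)$. As you yourself observe, though, the unknot comparison makes this sign bookkeeping self-correcting, so the argument goes through regardless. The $\Delta_K''(1)$ coefficient you compute, $\langle a,b\rangle/(\langle a,l\rangle\langle b,l\rangle) = -q/p$, is on the nose.
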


Be aware that our definition of the Casson-Walker invariant may differ from that of various other references by a factor of 2.  Our normalization is made for the convenience of the following fact among other things.

\begin{proposition}
 For a lens space $L(p,q)$, $\lambda(L(p,q))=s(q,p)$. 
\end{proposition}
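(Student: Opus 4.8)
The plan is to read off $\lambda(L(p,q))$ directly from the surgery formula, property (5) of Walker's theorem, applied to the unknot. We may assume $\gcd(p,q)=1$ and (as $L(p,q)\cong S^3$ is trivial) that $p\ge 2$. Let $U\subset S^3$ be the unknot, $K=S^3\setminus N(U)$ its exterior (a solid torus), and $T=\partial K$. Write $\mu\in H_1(T;\Z)$ for the meridian of $U$ and $l\in H_1(T;\Z)$ for the Seifert longitude, so that $l$ bounds a disc in $K$. Then filling $K$ along the slope $a:=p\mu+ql$ produces $L(p,q)=S^3_{p/q}(U)$, while filling along $b:=\mu$ produces $S^3=S^3_\infty(U)$; both slopes are primitive and have nonzero intersection with $l$, so property (5) applies and gives
\[
\lambda(L(p,q))=\lambda(S^3)+\tau(a,b;l)+\frac{\langle a,b\rangle}{\langle a,l\rangle\langle b,l\rangle}\,\Delta_U''(1).
\]

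I would then knock out every term except $\tau$. First, $\lambda(S^3)=0$, since by property (1) $\lambda$ restricts to Casson's invariant on homology spheres and that invariant vanishes on $S^3$; and $\Delta_U''(1)=0$ because $\Delta_U(t)\equiv 1$, so the last term drops as well. Hence $\lambda(L(p,q))=\tau(a,b;l)$. Next, in the definition of $\tau$ I would choose the basis $x,y$ of $H_1(T;\Z)$ by taking $y=l$ (so $d=1$, which kills the term $\tfrac{d^2-1}{12}\cdot\tfrac{\langle a,b\rangle}{\langle a,l\rangle\langle b,l\rangle}$) and $x=\mu$ oriented so that $\langle x,y\rangle=1$. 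A direct computation of the four relevant intersection numbers gives $\langle x,a\rangle=q$, $\langle y,a\rangle=-p$, $\langle x,b\rangle=0$, $\langle y,b\rangle=-1$, whence
\[
\tau(a,b;l)=-s(q,-p)+s(0,-1)=-s(q,-p).
\]
Finally $s(0,-1)=0$ (empty sum, or $((0))=0$), and $s(q,-p)=-s(q,p)$ since $((-x))=-((x))$ together with $\mathrm{sign}(-p)=-\mathrm{sign}(p)$ produces one net sign change; therefore $\lambda(L(p,q))=-s(q,-p)=s(q,p)$.

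The one genuinely delicate point — and the step I expect to be the main obstacle — is the bookkeeping of orientation and sign conventions: the orientation of $L(p,q)$ implicit in "$p/q$-surgery on the unknot", the sign of the intersection pairing on $T$, and the convention for Dedekind sums with negative second argument all have to be pinned down consistently for the final sign to come out exactly as $s(q,p)$ (this is also where the factor-of-$2$ normalization remarked on above must be accounted for). Beyond this, the mathematical content is just the two vanishings $\lambda(S^3)=0$ and $\Delta_U''(1)=0$ plus elementary properties of $s(q,p)$.
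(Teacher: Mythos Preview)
The paper does not actually prove this proposition; it is simply stated as a known fact (implicitly from Walker's monograph), alongside the other background results in Section~2.1. Your derivation is nevertheless correct and is exactly the natural way to extract the formula from property~(5) of Walker's theorem: specialise to the unknot, use $\lambda(S^3)=0$ and $\Delta_U''(1)=0$, take $d=1$ to kill the correction term in $\tau$, and reduce to a sign identity for Dedekind sums. One small remark on your final parity count: when you pass from $s(q,-p)$ to $-s(q,p)$ there are \emph{three} sign flips in total (one from $\mathrm{sign}(-p)$ and one from each of the two sawtooth factors $((k/(-p)))$ and $((kq/(-p)))$), which indeed gives a net single sign change; your sentence reads as if only two sources are in play, so it is worth spelling this out. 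Your closing caveat about orientation and normalisation conventions is well placed and is genuinely the only place where care is needed.
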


\subsection{Casson-Gordon Invariant}

In \cite{CG}, Casson and Gordon defined, for a closed oriented three-manifold $M$ and a surjective homomorphism $\phi: H_1(M) \longrightarrow \Z_m$, an invariant $\sigma_r(M,\phi)$, $0<r<m$.  The definition goes as follows.  Suppose $\tilde{M} \rightarrow M$ is the $m$-fold cyclic covering induced by $\phi$.  Pick up an $m$-fold cyclic branched covering of four-manifold $\tilde{W} \rightarrow W$, branched over a properly embedded surface $F$ in $W$, such that $\partial(\tilde{W}\rightarrow W)=(\tilde{M}\rightarrow M)$.  The existence of such $(W,F)$ follows from Lemma 2.2 of \cite{CG}.

The intersection form on $H_2(\tilde W)$ extends naturally to a nonsingular Hermitian from on $H:=H_2(\tilde{W}) \otimes \C$.  Let $\tau: H \longrightarrow H$ be the automorphism induced by the covering transformation of $\tilde{W}$.  Note that $\tau$ is an isometry of $(H,\cdot)$, and that $\tau^m=\text{id}$.  Write $\omega=e^{2\pi i/m}$, and let $E_r$ be the $\omega^r$-eigenspace of $\tau$, $0\leq r<m$.  Then $(H,\cdot)$ decomposes as an orthonormal direct sum $E_0\oplus E_1 \oplus \cdots \oplus E_{m-1}$.  

Let $\varepsilon_r(\tilde{W})$ be the signature of the restriction of $\cdot$ to $E_r$, and denote by $\sig(W)$ the signature of $W$.  Then, define for $0<r<m$, the rational number
$$\sigma_r(M,\phi)=\sig (W)-\varepsilon_r(\tilde{W})-\frac{2[F]^2r(m-r)}{m^2}.$$         

Novikov additivity and the $G$-signature theorem show that $\sigma_r(M, \phi)$ is independent of the choice of the cyclic branched cover $\tilde{W} \rightarrow W$, and that it depends only on the cyclic cover $\tilde{M}\rightarrow M$ and $r$.  Hence, when $H_1(M)=\Z_m$, the sum $\sum_{r=1}^{m-1}\sigma_r(M,\phi)$ is an invariant of the manifold M, independent of $\phi$.  

\begin{definition}
Define the total Casson-Gordon invariant of $M$ to be $$\tau(M)= m \, \text{sig}(W)-\text{sig}(\tilde{W})-\frac{[F]^2(m^2-1)}{3m}. $$
\end{definition}

Let $K$ be a knot in an integral homology sphere $Y$ and $m\neq 0$.  We set $$\sigma(K,m)=\sum_{r=1}^{m-1}\sigma_K(e^{2i\pi r/m}),$$ 
where $\sigma_K(\xi)$ is the signature of the matrix $A(\xi):=(1-\bar{\xi})A+(1-\xi)A^{T}$ for a Seifert matrix $A$ of $K$.  $|\xi|=1$.

A surgery formula for the total Casson-Gordon invariant was established in \cite{BL}.

\begin{proposition}
Let $K$ be a knot in an integral homology sphere $Y$, then $$\tau(Y_{p/q}(K))=\tau(L(p,q))-\sigma(K,p).$$
\end{proposition}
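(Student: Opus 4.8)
The plan is to construct an explicit cyclic branched cover realizing the surgered manifold $Y_{p/q}(K)$ and compute the two signature terms and the self-intersection term appearing in the definition of $\tau$. First I would observe that since $K$ is a knot in an integral homology sphere $Y$, we have $H_1(Y_{p/q}(K);\Z)\cong\Z_p$, so the total invariant $\tau(Y_{p/q}(K))$ is well-defined. Take $m=p$ and let $\phi\colon H_1(Y_{p/q}(K))\to\Z_p$ be the canonical surjection; let $\widetilde{M}\to M:=Y_{p/q}(K)$ be the associated $p$-fold cyclic cover. The first key step is to build a four-manifold $(W,F)$ with a $p$-fold cyclic branched cover $\widetilde{W}\to W$ bounding $(\widetilde{M}\to M)$. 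The natural choice is a two-handle cobordism: attach a two-handle to $Y\times I$ along $K$ with framing determined by $p/q$, which after capping produces a four-manifold $W_0$ with $\partial W_0 = -Y \sqcup Y_{p/q}(K)$; pushing a Seifert surface of $K$ into the interior and tubing along the core of the handle gives the branching surface $F$.

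Next I would split the computation into the contribution of the unknot and the contribution of $K$. Running the same construction for the unknot $U\subset S^3$ with slope $p/q$ produces $L(p,q)$ together with its standard branched cover, and the difference of the two branched covers is governed entirely by the Seifert form of $K$: gluing $W$ for $K$ to the orientation-reversed $W$ for $U$ along their common boundary piece yields a closed (or boundary-controlled) four-manifold whose equivariant signatures are computed by the $G$-signature theorem, and the eigenspace signatures $\varepsilon_r$ pick up precisely the terms $\sigma_K(e^{2\pi i r/p})$ from the $\omega^r$-eigenspace of the covering translation acting on the branched cover of the Seifert surface. Summing over $r=1,\dots,p-1$ and using $\sigma(K,p)=\sum_{r=1}^{p-1}\sigma_K(e^{2\pi i r/p})$, together with Novikov additivity to separate $\sig(W)$ and the $[F]^2$ self-intersection terms (which match between $K$ and $U$ because the framings agree), should give the stated formula
$$\tau(Y_{p/q}(K))=\tau(L(p,q))-\sigma(K,p).$$

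The main obstacle I expect is the bookkeeping of the self-intersection number $[F]^2$ and verifying that it is genuinely independent of the knot, depending only on the surgery slope $p/q$: one must check that the Seifert surface of $K$, once pushed into $W$ and tubed along the two-handle core, has the same self-intersection as the corresponding disk-with-tube for $U$, so that the $\tfrac{[F]^2(m^2-1)}{3m}$ terms cancel in the difference $\tau(Y_{p/q}(K))-\tau(L(p,q))$. This requires a careful framing analysis near the branch locus. The equivariant signature bookkeeping is the other delicate point — one must be sure that the $G$-signature theorem contributions away from $F$ cancel when comparing the $K$ and $U$ constructions, leaving only the Tristram–Levine-type signatures $\sigma_K(\xi)$; but since this formula is quoted from \cite{BL}, I would ultimately cite their computation rather than reproduce every step, and use the construction above mainly to make the identification of terms transparent.
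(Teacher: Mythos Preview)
The paper does not prove this proposition at all: it is stated as a result ``established in \cite{BL}'' and no argument is given beyond the citation. So there is no proof in the paper to compare your sketch against; your final remark that you ``would ultimately cite their computation'' is in fact exactly what the paper does, and nothing more.

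That said, your outline is a reasonable description of the strategy actually used in \cite{BL}: realize $Y_{p/q}(K)$ as the boundary of a $2$-handle cobordism, take the branch surface to be a pushed-in Seifert surface capped with the core, compare with the identical construction for the unknot to isolate the Tristram--Levine signatures via the $G$-signature theorem, and use Novikov additivity. The two points you flag as delicate (that $[F]^2$ depends only on the framing $p/q$ and not on $K$, and the cancellation of all $G$-signature contributions except the eigenspace signatures of the Seifert form) are exactly the places where the real work lies, and neither is trivial; if you wanted a self-contained proof you would need to carry these out explicitly rather than just assert them. For the purposes of this paper, however, the citation suffices.
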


The value of the invariant for the Lens space $L(p,q)$ is given by: 
\begin{proposition}
For a lens space $L(p,q)$, $\tau(L(p,q))=-4p\cdot s(q,p)$.  

\end{proposition}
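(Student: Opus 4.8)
The plan is to evaluate the three quantities $\operatorname{sig}(W)$, $\operatorname{sig}(\widetilde{W})$ and $[F]^2$ entering the definition of $\tau$ for an explicit $\Z_p$-branched cover realizing $L(p,q)$, and then to recognize the result as $-4p\,s(q,p)$ via the classical cotangent expansion of the Dedekind sum. The model is the linear action of $\Z_p$ on $D^4\subset\C^2$ given by $(z_1,z_2)\mapsto(\zeta z_1,\zeta^q z_2)$ with $\zeta=e^{2\pi i/p}$: it is free on $S^3=\partial D^4$ with quotient $L(p,q)$, and $D^4/\Z_p$ is a $4$-orbifold with a single cone point whose resolution is the definite linear plumbing $W$ associated to the continued-fraction expansion of $p/q$. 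One takes $\widetilde{W}=D^4$ (blown up as dictated by the resolution), $W$ the resolved plumbing, and $F$ the induced branch locus, all generalizing the case $q=1$: there $W$ is the disk bundle over $S^2$ of Euler number $p$ (with the appropriate orientation of $L(p,1)$), $F$ its zero-section, $\widetilde{W}=\mathbb{CP}^2\smallsetminus B^4$, so $\operatorname{sig}(W)=\operatorname{sig}(\widetilde{W})=1$ and $[F]^2=p$, whence $\tau(L(p,1))=p-1-\tfrac{p^2-1}{3}=-\tfrac{(p-1)(p-2)}{3}=-4p\,s(1,p)$. This example also serves to calibrate orientation conventions.

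For general $q$ I would feed the $\Z_p$-manifold $D^4$ into the Atiyah--Singer $G$-signature theorem with its Atiyah--Patodi--Singer boundary refinement. Because $H_2(D^4)=0$, one has $\operatorname{sig}(g^k,D^4)=0$ for all $k$, so the theorem equates the fixed-point defect at the origin — for $g^k$, with rotation numbers $(k,kq)$, the standard term proportional to $\cot(\pi k/p)\cot(\pi kq/p)$ — with the $\rho$-invariant of the boundary lens-space action. Passing through the eigenspace projection $\varepsilon_r(\widetilde{W})=\tfrac1p\sum_k\omega^{-rk}\operatorname{sig}(g^k,\widetilde{W})$ expresses each $\sigma_r(L(p,q))$ in closed form; summing over $r=1,\dots,p-1$ — equivalently, using $\tau=p\operatorname{sig}(W)-\operatorname{sig}(\widetilde{W})-\tfrac{[F]^2(p^2-1)}{3p}$ together with the plumbing data and the standard expression of $s(q,p)$ through the continued-fraction coefficients — should yield
$$\tau(L(p,q))=-\sum_{k=1}^{p-1}\cot\!\left(\frac{\pi k}{p}\right)\cot\!\left(\frac{\pi kq}{p}\right).$$
The proof then concludes by the classical identity $s(q,p)=\tfrac{1}{4p}\sum_{k=1}^{p-1}\cot(\pi k/p)\cot(\pi kq/p)$, which follows from the finite Fourier (cotangent) expansion of the sawtooth function $((x))$ and orthogonality, giving $\tau(L(p,q))=-4p\,s(q,p)$. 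Consistency checks: $\tau(-Y)=-\tau(Y)$ against $s(-q,p)=-s(q,p)$, and $L(p,q)\cong L(p,q')$ for $qq'\equiv1\ (\mathrm{mod}\ p)$ against $s(q^{-1},p)=s(q,p)$.

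The main obstacle is bookkeeping rather than a single hard step: one must reconcile all signs — the orientation of $L(p,q)$ as $\pm(p/q)$-surgery on the unknot, the sign of $[F]^2$, the normalizations of $\sigma_r$ and $\tau$ (the paper has already flagged a factor-of-$2$ ambiguity in the Casson--Walker normalization) — before one can be sure the constant is $-4p$ and not, say, $+4p$ or $-8p$; the $L(p,1)$ and $\mathbb{RP}^3$ evaluations exist precisely to pin this down. A secondary subtlety is that the Casson--Gordon construction literally wants $F$ a genuine embedded surface with $\widetilde{W}\to W$ an honest branched cover, whereas the cleanest source of the cotangent term is the orbifold $D^4/\Z_p$; reconciling the two — by resolving the cone point and tracking the resulting changes in $\operatorname{sig}$ and $[F]^2$, or by using a surface branched cover over an orbifold base together with the independence-of-filling statement (Lemma~2.2 of \cite{CG}) — is the step needing the most care. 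A route that avoids the geometry entirely is to run the Euclidean algorithm on $(p,q)$: at each step exhibit a single plumbing $2$-handle cobordism from $L(p,q)$ to $-L(q,p\bmod q)$, up to boundary-connected-sum with a piece of known signature, so that Novikov additivity reproduces the Dedekind reciprocity correction $-\tfrac14+\tfrac1{12}\big(\tfrac pq+\tfrac qp+\tfrac1{pq}\big)$; with base case $S^3$ and $\tau(S^3)=0$, induction then finishes the proof.
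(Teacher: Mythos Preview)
The paper does not actually prove this proposition: it is stated without argument, as a known computation imported from the literature (the surgery formula just above it is attributed to Boyer--Lines \cite{BL}, and the underlying lens-space signature defects go back to Casson--Gordon \cite{CG} and ultimately to Hirzebruch). So there is no ``paper's own proof'' to compare against.

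That said, your outline is the standard route and is essentially correct. The $\Z_p$-action on $D^4$ with weights $(1,q)$ is exactly the model Casson and Gordon use; the $G$-signature/APS argument you sketch produces the cotangent sum, and the identity $s(q,p)=\tfrac{1}{4p}\sum_{k=1}^{p-1}\cot(\pi k/p)\cot(\pi kq/p)$ finishes it. Your $L(p,1)$ check is right and does pin down the constant. The honest caveat you raise --- that the literal Casson--Gordon setup wants a genuine branched cover $\widetilde W\to W$ over an embedded surface, while the cleanest computation lives on the orbifold $D^4/\Z_p$ --- is real but routine: one either resolves the quotient singularity via the Hirzebruch--Jung continued-fraction plumbing (your ``definite linear plumbing $W$'') and tracks the signatures, or invokes independence of the filling to pass freely between models. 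The alternative inductive proof via Dedekind reciprocity that you mention at the end is also valid and is in some ways cleaner for bookkeeping. Either way, what you have is a correct proof sketch of a result the paper simply quotes.
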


\section{Preliminaries in Heegaaard Floer homology}

Heegaard Floer homology is an invariant for closed three manifolds Y (see Ozsv\'ath-Szab\'o \cite {OSzAnn1}\cite{OSzAnn2}).  The invariant, denoted $HF^\circ(Y)$, is the homology of a chain complex whose generators have a combinatorial definition, and whose boundary operator counts certain pseudo-holomorphic disks in associated spaces.  

In  Ozsv\'ath-Szab\'o \cite{OSzKnot} and Rasmussen \cite{Ra}, a closely related invariant is defined for null-homologous knots $K$ in a closed, oriented three-manifold $Y$, taking the form of an induced filtration on the Heegaard Floer complex of $Y$.  The filtered chain homotopy type of this complex is a knot invariant, known as ``knot Floer homology''. Given an integer $n$, let $Y_n(K)$ denote the three-manifold obtained by $n$-framed surgery on $Y$ along $K$.  When $n$ is sufficiently large, there is an immediate relationship between the knot Floer homology of $K$ and the Heegaard Floer homology of $Y_n(K)$, see \cite [Section 4]{OSzKnot}.  The general case for an arbitrary integer or rational surgery on an integral homology sphere $Y$ is developed by Ozsv\'ath and Szab\'o in \cite {OSzIntSurg} \cite{OSzRatSurg}, which we review next.       

\subsection{Rational surgery formulas}

Knot Floer homology associates to $K$ a
$\Z\oplus\Z$--filtered $\Z[U]$-complex $C=CFK^\infty(Y,K)$, generated over $\Z$ by a set $X$ equipped with a function
$\F: X\longrightarrow\Z\oplus \Z$ with the property that, if $\F({\bf x})=(i,j)$,
then $\F(U\cdot{\bf x})=(i-1,j-1)$ and $\F ({\bf y})\leq\F({\bf x})$
for all $\bf y$ having nonzero coefficient in $\partial{\bf x}$.

For a region $S$ in the plane with the property that $(i,j)\in S$
implies $(i+1,j),(i,j+1)\in S$, let $C\{S\}$ be the natural quotient complex of $C$ generated by $\bf x$ with $\F({\bf x})\in S$. For an
integer $s$, we define $A^+_{\smash{s}}(K):=C\{\max(i,j-s)\geq0\}$ and $B^+(K):=C\{i\geq0\}$.  There are two canonical chain maps $v_s^+: A^+_s\longrightarrow B^+$ and $h_s^+: A^+_s\longrightarrow B^+$.  The map $v_s^+$ is projection onto $C\{i\geq 0\}$, while $h_s^+$ is projection onto  $C\{j\geq s\}$, followed by the identification with  $C\{j\geq 0\}$, followed by the chain homotopy equivalence from  $C\{j\geq 0\}$ to
 $C\{i\geq 0\}$.



Now fix a surgery slope $p/q$ and suppose $q>0$. Consider the two chain
complexes
$$\A^+=\bigoplus_{t\in\Z}(t,A^+_{\lfloor\frac{t}{q}\rfloor}),\quad
\B^+=\bigoplus_{t\in\Z}(t,B^+),$$ where $\lfloor x\rfloor$ is the greatest
integer not bigger than $x$. An element of $\A^+$ could be written as
$\{(t,a_t)\}_{t\in\Z}$ with $a_t\in A^+_{\lfloor\frac tq\rfloor}$. Define a
chain map $D^+_{\smash{p/q}}: \A^+\rightarrow\B^+$ by
$$D^+_{\smash{p/q}}\{(t,a_t)\}=\{(t,b_t)\},$$ 
where $$\quad b_t=v^+(a_t)+h^+(a_{t-p}).$$ Let $\X_{\smash{p/q}}^+(K)$ be the mapping cone of
$D^+_{\smash{p/q}}$. 

Note that $\X^+_{\smash{p/q}}$ naturally splits into the direction sum of $p$ subcomplexes
$$\X^+_{p/q}=\bigoplus_{i=0}^{p-1}\X^+_{i,p/q},$$
where $\X^+_{i,p/q}$ is the subcomplex of $\X^+_{\smash{p/q}}$ containing all
$A^+_t$ and $B^+_t$ with $t\equiv i\pmod{p}$.

The Heegaard Floer homology of a $p/q$
surgered manifold is determined by the mapping cone $\X^+_{\smash{p/q}}$ according to the following theorem of Ozsv{\'a}th and Szab{\'o}.

\begin{theorem}[Ozsv{\'a}th and Szab{\'o} \cite{OSzRatSurg}]
 Let $K \subset
Y$ be a nullhomologous knot and $p,q$ a pair of coprime integers.
Then, for each $i\in \Z/p\Z$, there is a relatively graded isomorphism of groups
$$ HF^+(Y_{\smash{p/q}}(K),\fraks_i) \cong H_*(\X^+_{i,p/q}),$$ 
where $\fraks_i$ is the \spinc structure
corresponding to $i\in\Z/p\Z$.
\end{theorem}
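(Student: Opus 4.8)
The plan is to reconstruct the argument of \cite{OSzRatSurg}, building on the \emph{integer} surgery formula of \cite{OSzIntSurg}; none of the ingredients is sensitive to the ambient manifold beyond its being an integral homology sphere, so the same proof works in our setting. Recall first the large--$N$ input: by \cite[Section 4]{OSzKnot}, for $N$ sufficiently large a doubly-pointed Heegaard diagram for $(Y,K)$ can be ``wound'' to produce a Heegaard diagram for $Y_N(K)$ whose complex in the $\Spinc$ structure labelled $s$ is chain homotopy equivalent to $A^+_s=C\{\max(i,j-s)\ge 0\}$. Feeding this into a Heegaard triple built by interpolating to the $n$-surgery diagram and counting holomorphic triangles yields the integer surgery formula $HF^+(Y_n(K),\fraks_i)\cong H_*(\X^+_i(n))$, where $\X^+(n)$ is the mapping cone of $D^+_n\{(s,a_s)\}=\{(s,\,v^+_s(a_s)+h^+_{s-n}(a_{s-n}))\}$ on $\bigoplus_s(s,A^+_s)\to\bigoplus_s(s,B^+)$, the maps $v^+$ and $h^+$ being the triangle maps realizing the projection onto $C\{i\ge0\}$ and the projection onto $C\{j\ge s\}$ followed by the flip equivalence to $C\{i\ge0\}$.

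First I would set up the geometry for the rational case. Assuming $q>0$ (as in the construction of $\X^+_{p/q}$), I would present $Y_{p/q}(K)$ by a Heegaard diagram obtained from one for $(Y,K)$ by replacing a neighbourhood of the $\beta$-curve meeting $K$ with the curves of a genus-one Heegaard diagram of the lens space $L(p,q)$; equivalently, one may expand $p/q$ as a continued fraction and realize $Y_{p/q}(K)$ as an integer surgery on an auxiliary knot obtained by absorbing a chain of meridional unknots into $K$, chosen so that its knot Floer complex is assembled from $q$ shifted copies of $CFK^\infty(Y,K)$, and then apply the integer surgery formula to that knot. Either way, the combinatorial effect is that the ``$q$'' enters: in the mapping-cone description the block in slot $t$ becomes $A^+_{\lfloor t/q\rfloor}$ (each $A^+_s$ occurring for $q$ consecutive values of $t$), while the geometric shift in the differential is by the numerator $p$ rather than by $n$, producing exactly $D^+_{p/q}\{(t,a_t)\}=\{(t,\,v^+(a_t)+h^+(a_{t-p}))\}$.

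Next I would run the bookkeeping. A truncation argument, discarding the acyclic ``tails'' of the (now doubly-infinite) mapping cone exactly as in the integer case, upgrades the quasi-isomorphism coming from the triangle count into an honest isomorphism $HF^+(Y_{p/q}(K))\cong H_*(\X^+_{p/q})$. To split this according to $\Spinc$ structures, I would note that $H_1(Y_{p/q}(K))\cong\Z/p\Z$ and that the relevant two-handle cobordism maps respect $\Spinc$ structures; since the algebraic splitting $\X^+_{p/q}=\bigoplus_{i=0}^{p-1}\X^+_{i,p/q}$ is precisely the decomposition this cobordism sees, it must correspond to $\bigoplus_i HF^+(Y_{p/q}(K),\fraks_i)$. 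The relative grading statement then follows by tracking the Maslov-grading shifts of the various triangle maps through the expected-dimension formula for the moduli spaces.

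The hard part will be the middle step: arranging the diagram (or the auxiliary surgery) so that the holomorphic polygon counts really do assemble into the maps $v^+$ and $h^+$ with the correct shifts, signs, and truncations, and in particular controlling the direct-limit/truncation uniformly as the length of the continued fraction — hence the amount of ``winding'' needed in the diagram — grows. Once the combinatorics of the mapping cone is matched to the geometry, the passage to $HF^+$ and the grading assertion are essentially formal, following the integer case line by line.
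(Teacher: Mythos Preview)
There is nothing to compare against: the paper does not prove this theorem. It is stated as a result of Ozsv\'ath and Szab\'o, with a citation to \cite{OSzRatSurg}, and is used as a black box in the subsequent sections. No argument, sketch, or indication of proof is given in the paper itself.

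Your outline is a reasonable summary of the strategy in \cite{OSzRatSurg}---reduce the rational case to the integer surgery formula of \cite{OSzIntSurg} by realizing $Y_{p/q}(K)$ as an integer surgery on an auxiliary knot (obtained by cabling/absorbing a chain of meridional unknots, equivalently working in $Y\#L(q,r)$), so that the knot Floer complex of the auxiliary knot is built from $q$ copies of $CFK^\infty(Y,K)$ and the integer mapping cone becomes exactly $\X^+_{p/q}$---and your identification of the ``hard part'' is accurate. But since the present paper simply imports the statement, the appropriate comparison is that your proposal supplies a proof sketch where the paper supplies none.
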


Restricting the preceding formulation to the ``boundary'', we obtain a similar surgery formula for $\HF$.  Let $\hat{A}_{\smash{s}}(K):=C_K\{\max(i,j-s)=0\}$ and $B^+(K):=C\{i=0\}$, and the maps $\hat{v}$, $\hat{h}$ be the induced projections accordingly; we form the mapping cone $ \hat{\X}_{i, \smash{p/q}}(K)$.  

\begin{theorem}[Ozsv{\'a}th and Szab{\'o} \cite{OSzRatSurg}]     \label{rationalsurgeryformula}
 Let $K \subset
Y$ be a nullhomologous knot and $p,q$ a pair of coprime integers.
Then, for each $i\in \Z/p\Z$, there is a relatively graded isomorphism of groups
$$ \HF(Y_{\smash{p/q}}(K),\fraks_i) \cong H_*(\hat{\X}_{i,p/q}),$$ 
where $\fraks_i$ is the \spinc structure
corresponding to $i\in\Z/p\Z$.
\end{theorem}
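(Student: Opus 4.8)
The plan is to deduce Theorem~\ref{rationalsurgeryformula} from the $HF^+$ surgery formula stated just above it, exploiting the fact that the hat flavor is cut out of the plus flavor by the operator $U$. Concretely, for every integer $s$ one has an identification of chain complexes $\hat{A}_s(K)=\ker\bigl(U\colon A^+_s(K)\to A^+_s(K)\bigr)=C\{\max(i,j-s)=0\}$: since $U$ lowers both filtration levels by $1$, its kernel on $C\{\max(i,j-s)\ge 0\}$ is spanned exactly by the generators sitting on the corner $\max(i,j-s)=0$, and any differential out of such a corner generator stays on the corner because $\F$ is non-increasing under $\d$. Likewise $\hat{B}(K)=\ker\bigl(U\colon B^+(K)\to B^+(K)\bigr)=C\{i=0\}$, and the ``induced projections'' $\hat v,\hat h$ of the theorem statement are precisely $v^+$ and $h^+$ restricted to these kernels. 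Because $C=CFK^\infty(Y,K)$ is (after inverting $U$) a free $\Z[U]$-module, $U$ is also surjective on $A^+_s$ and on $B^+$, so one gets short exact sequences $0\to\hat{A}_s\to A^+_s\xrightarrow{U}A^+_s\to 0$ and $0\to\hat{B}\to B^+\xrightarrow{U}B^+\to 0$.

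First I would assemble these into one short exact sequence of mapping cones. The maps $v^+$, $h^+$, and hence $D^+_{p/q}$, are $U$-equivariant chain maps, so they carry $\ker U$ to $\ker U$ termwise; applying $\ker U$ to each summand of the splitting $\X^+_{p/q}=\bigoplus_{i=0}^{p-1}\X^+_{i,p/q}$ identifies $\hat{\X}_{i,p/q}$, as a chain complex, with $\ker\bigl(U\colon\X^+_{i,p/q}\to\X^+_{i,p/q}\bigr)$, while $U$ stays surjective on $\X^+_{i,p/q}$. This yields a short exact sequence of complexes
$$0\to\hat{\X}_{i,p/q}\to\X^+_{i,p/q}\xrightarrow{U}\X^+_{i,p/q}\to 0 .$$
On the geometric side there is the tautological short exact sequence $0\to\CF(Y_{p/q}(K),\fraks_i)\to CF^+(Y_{p/q}(K),\fraks_i)\xrightarrow{U}CF^+(Y_{p/q}(K),\fraks_i)\to 0$, which respects the \spinc decomposition since $U$ preserves \spinc structures.

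Next I would invoke the (essential) fact that the isomorphism of the $HF^+$ surgery formula is realized by a $U$-equivariant quasi-isomorphism $f\colon CF^+(Y_{p/q}(K),\fraks_i)\to\X^+_{i,p/q}$ (possibly through a zig-zag of $U$-equivariant quasi-isomorphisms), compatible with the splitting into \spinc structures. Any such $f$ commutes with $U$ and hence induces a map on kernels $\CF(Y_{p/q}(K),\fraks_i)\to\hat{\X}_{i,p/q}$ fitting into a commuting ladder of the two long exact sequences above; by naturality of the connecting homomorphism and the five lemma this induced map is again a quasi-isomorphism (equivalently, if $f$ and its homotopy inverse and the homotopies are $U$-equivariant, one simply restricts them to $\ker U$, which is legitimate because they all commute with $U$). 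Taking homology gives the asserted isomorphism $\HF(Y_{p/q}(K),\fraks_i)\cong H_*(\hat{\X}_{i,p/q})$, and it is relatively graded because $U$ is homogeneous of degree $-2$ and every map in sight is homogeneous.

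The main obstacle is exactly the input just flagged: one must know that the $HF^+$ surgery formula is not merely an abstract isomorphism of relatively graded groups but comes from a $U$-equivariant quasi-isomorphism of complexes respecting the \spinc splitting. This is what Ozsv\'ath--Szab\'o actually establish --- the large-surgery identification $HF^+(Y_n(K),[s])\cong H_*(A^+_s)$, the integer and rational surgery exact triangles, and the truncation of the infinite mapping cone to a quasi-isomorphic finite one are all implemented by $U$-equivariant maps. If one wishes to avoid quoting this, the alternative is to rerun the same bootstrap with hat coefficients: the large-surgery formula $\HF(Y_n(K),[s])\cong H_*(\hat{A}_s)$ for $n\gg 0$, the hat-flavored surgery exact triangles, and the identical truncation argument, which produce the statement directly. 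In either route the only remaining bookkeeping is routine: checking that $\hat v,\hat h$ in the statement are $v^+|_{\ker U}$, $h^+|_{\ker U}$ under $\hat{A}_s=C\{\max(i,j-s)=0\}$, and that the mapping-cone relative grading on $\hat{\X}_{i,p/q}$ agrees with the grading inherited from $\ker(U|_{\X^+_{i,p/q}})$.
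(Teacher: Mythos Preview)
The paper does not prove this theorem: it is quoted as a result of Ozsv\'ath and Szab\'o from \cite{OSzRatSurg}, with only the one-line remark ``Restricting the preceding formulation to the `boundary', we obtain a similar surgery formula for $\HF$'' preceding the statement. Your proposal is a correct and careful unpacking of exactly what that remark means --- identifying $\hat{A}_s$, $\hat{B}$, and $\hat{\X}_{i,p/q}$ with the kernels of $U$ on their plus counterparts, assembling the short exact sequences, and invoking the five lemma. You also correctly flag the one genuine input needed beyond the bare statement of the $HF^+$ formula: that the isomorphism is induced by a $U$-equivariant quasi-isomorphism (or zig-zag thereof), which is indeed how Ozsv\'ath and Szab\'o construct it. So your approach matches the spirit of the paper's treatment, but supplies the details the paper omits.
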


\subsection{A rank formula}
Before applying the rational surgery formula to find the rank of $\HF$ of a surgered manifold, we define several invariants pertaining to a chain complex $CFK^\infty$.  From now on, $Y$ is assumed to be an integral homology $L$-space, unless otherwise specified.  

Let $\F_Y(K,m) := C\{i=0 ,\, j\leq m \} \subset \CF(Y)$ be the subcomplex generated by intersection points whose filtration level is less than or equal to $m$.  We obtain a sequence of maps $$\iota^m_K: \F_Y(K,m)\longrightarrow \CF(Y),$$
which induce isomorphisms in homology for all sufficiently large integers $m$.  

\begin{definition}
Define $\tau_Y(K)$ by $$\tau_Y(K)=\min\{m\in \Z | \iota^m_K: \F_Y(K,m) \longrightarrow \CF(Y) \, \text{induces a non-trivial map in homology} \}$$

\end{definition}

The invariant $\tau_Y(K)$ is an invariant of the knot $K$, which gives a lower bound on the four-ball genus of the knot $K$ when $Y=S^3$.  Likewise, we can define an invariant $\nu_Y(K)$ in a similar manner.
  
\begin{definition}
 Define $\nu_Y(K)$ by $$\nu_Y(K)=\min\{s\in \Z | \hat{v}_s: \hat{A}_s \longrightarrow \CF(Y) \, \text{induces a non-trivial map in homology} \}.$$
\end{definition}

\bigskip
Let $m(K)\subset -Y$ be the mirror image of the knot $K \subset Y$; that is, so to speak, reversing the orientation of the ambient manifold $Y$ while keeping the original orientation on $K$.  When $Y=S^3$, $m(K)$ is the mirror image in the usual sense if we apply the orientation reversing homeomorphism of $S^3$, namely, the reflection.  In general, $Y\neq -Y$; but we claim the following identity 
$$\tau_Y(K)=-\tau_{-Y}(m(K)).$$

The proof is a direct adaption of \cite[Lemma 3.3]{OSzFourBall}, that follows from the duality map $\mathcal{D}: \CF_*(Y)\rightarrow \CF^*(-Y)$.   

We also claim that $$\nu_Y(K)=\tau_Y(K) \;\; \text{or} \;\; \tau_Y(K)+1.$$

When $Y=S^3$, this is proved in \cite[Proposition 3.1]{OSzFourBall} based on a purely homological-algebra argument; this argument evidently hold for the filtered chain group $CFK^\infty(Y,K)$ as well, because $S^3$ cannot be distinguished from an arbitrary integral homology $L$-space in the chain level.            
\bigskip

Therefore, either $\nu_Y(K)$ or $\nu_{-Y}(m(K))$ is non-negative.  
\bigskip

Now, we calculate the rank of $\HF(Y_{p/q}(K))$.  Since $\rk(Y_{p/q}(K))=\rk((-Y)_{-p/q}(m(K))$, it suffices to consider knots $K$ with $\nu_Y(K) \geq \nu_{-Y}(m(K))$.  Then, we have the following rank formula analogous to \cite[Proposition 9.6]{OSzRatSurg}.

\begin{proposition}\label{rank}
Let $K$ be a knot in an integral homology $L$-space $Y$, 
and fix a pair of relatively prime integers $p$ and $q$ with $p \neq 0$ and $q>0$; and suppose that $\nu_Y(K)\geq \nu_{-Y}(m(K))$.  Then, if $\nu_Y(K)>0$ or $p>0$, $$\rk(\HF(Y_{p/q}(K))=p+2\max(0, (2\nu_Y(K)-1)q-p)+q(\sum_s(\rk H_*(\hat{A}_s)-1));$$
if $\nu_Y(K)=0$, we have that  $$\rk(\HF(Y_{p/q}(K))=|p|+q(\sum_s(\rk H_*(\hat{A}_s)-1)).$$

\end{proposition}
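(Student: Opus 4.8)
The plan is to mimic the proof of \cite[Proposition 9.6]{OSzRatSurg} verbatim, since the mapping cone $\hat{\X}_{p/q}(K)$ from Theorem \ref{rationalsurgeryformula} has the same formal structure whether $Y=S^3$ or $Y$ is an arbitrary integral homology $L$-space. The key point enabling the transfer is that $\HF(Y)\cong\Z$ (this is the definition of an integral homology $L$-space), so $B^+(K)$ has the same homology $\mathcal{T}^+$ as in the $S^3$ case, and $H_*(\hat B)\cong\Z$; all the homological bookkeeping in the original argument only used this fact plus the stable behavior of the maps $\hat v_s,\hat h_s$.

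First I would record the stabilization behavior: for $s\gg 0$ the map $\hat v_s\colon \hat A_s\to \hat B$ is a quasi-isomorphism, and for $s\ll 0$ the map $\hat h_s$ is a quasi-isomorphism; dually $\hat h_s$ is trivial for $s\gg 0$ and $\hat v_s$ is trivial for $s\ll 0$. Combined with the definition of $\nu_Y(K)$, this says $\hat v_s$ induces a nontrivial (hence, after identifying $H_*(\hat B)\cong\Z$, surjective) map on homology iff $s\geq \nu_Y(K)$, and by the mirror identities discussed before the proposition, $\hat h_s$ is nontrivial on homology iff $s\leq -\nu_{-Y}(m(K))$. Next I would compute the homology of the mapping cone $\hat{\X}_{i,p/q}$ by the standard truncation argument: since only finitely many $A^+_{\lfloor t/q\rfloor}$ have $\rk H_*(\hat A_s)>1$, one replaces the infinite zig-zag complex by a finite subquotient whose homology agrees with that of the full cone, and then counts ranks. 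The rank of the cone of a map $f$ is $\rk(\ker f)+\rk(\operatorname{coker} f)$, so one tallies: each $B^+$ factor contributes $1$ to the count of $\hat B$'s, each $\hat A_s$ factor contributes $\rk H_*(\hat A_s)$, the two maps $\hat v$ and $\hat h$ out of each $\hat A_s$ eat up rank according to whether they are surjective, and the ``extra'' rank $\rk H_*(\hat A_s)-1$ beyond the $\mathcal{T}^+$-tower survives. Summing over the $q$ copies of each $A_s$ that appear in $\bigoplus_t A^+_{\lfloor t/q\rfloor}$, and over the $p$ \spinc structures, gives the $q\sum_s(\rk H_*(\hat A_s)-1)$ term; the $p$ and the $2\max(0,(2\nu_Y(K)-1)q-p)$ terms come from counting how many of the $\hat v$'s and $\hat h$'s near the ``center'' fail to be surjective, which is controlled precisely by $\nu_Y(K)$ and the slope $p/q$. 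The split into the $\nu_Y(K)>0$ (or $p>0$) case and the $\nu_Y(K)=0$ case reflects whether the boundary map of the truncated cone is surjective onto all the $\hat B$ summands.

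The main obstacle is the case analysis bookkeeping: one must carefully track, for the subcomplex $\hat{\X}_{i,p/q}$ and for each value of $i\in\Z/p\Z$, exactly which shifted maps $\hat v_{\lfloor t/q\rfloor}$ and $\hat h_{\lfloor t/q\rfloor}$ are surjective on the $\mathcal{T}^+$-towers, handle the boundary effects of the truncation, and check that the $\pm$ asymmetry forced by the hypothesis $\nu_Y(K)\geq\nu_{-Y}(m(K))$ is accounted for correctly (this is exactly why we may assume that hypothesis WLOG, via $\rk\HF(Y_{p/q}(K))=\rk\HF((-Y)_{-p/q}(m(K)))$). All of this is carried out in \cite[Section 9]{OSzRatSurg}; I would simply point out at each step that the argument there never used any property of $S^3$ beyond $\widehat{HF}(S^3)\cong\Z$, and that the invariants $\tau_Y(K)$, $\nu_Y(K)$, and the maps $\hat v_s$, $\hat h_s$, together with the identities $\tau_Y(K)=-\tau_{-Y}(m(K))$ and $\nu_Y(K)\in\{\tau_Y(K),\tau_Y(K)+1\}$ established above, provide exact analogues of every ingredient used there. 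Hence the proof goes through mutatis mutandis.
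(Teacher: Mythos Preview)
Your approach is essentially the same as the paper's: both defer to the argument of \cite[Section 9]{OSzRatSurg} and observe that nothing beyond $\HF(Y)\cong\Z$ is used. The one difference worth flagging is that the paper isolates and explicitly reproves the analogue of \cite[Lemma 9.5]{OSzRatSurg}: when $\nu_Y(K)=\nu_{-Y}(m(K))=0$, one must check that the image of $(\hat h_0\oplus\hat v_0)_*$ in $H_*(\hat B\oplus\hat B)\cong\mathbb{F}\oplus\mathbb{F}$ is one-dimensional, not two-dimensional. You know both maps are individually nontrivial at $s=0$, but the rank count in the second displayed formula depends on their images coinciding; the paper verifies this via the identification $Y_N(K)\cong -(-Y)_{-N}(m(K))$ for large $N$ and a comparison of ranks. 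Your ``mutatis mutandis'' covers this, since the same lemma and the same proof appear in \cite{OSzRatSurg}, but it is the one step where the transfer deserves explicit mention rather than being absorbed into the general bookkeeping.
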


\begin{lemma}
 
Suppose $K$ is a knot in an integral homology $L$-space $Y$, 
and suppose that $\nu_Y(K)=\nu_{-Y}(m(K))=0$.  Then, the image of 
$$(\hat{h}_0\oplus \hat{v}_0)_* \longrightarrow H_*(\hat{B}\oplus\hat{B})\cong \mathbb{F}\oplus \mathbb{F}$$
is one-dimensional.  
\end{lemma}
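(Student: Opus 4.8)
The plan is to analyze the two maps $\hat v_0$ and $\hat h_0$ separately using the duality between $K\subset Y$ and $m(K)\subset -Y$, and then combine the information. First I would observe that $\hat A_0 = C_K\{\max(i,j)=0\}$ and $\hat B = C\{i=0\}$, and that the hypothesis $\nu_Y(K)=0$ means precisely that $\hat v_0\colon \hat A_0\to \hat B$ induces a nontrivial — hence, since both homologies are $\mathbb{F}$ (as $Y$ is an integral homology $L$-space and $\hat A_0,\hat B$ compute $\HF$ of $Y$ up to the usual identifications), surjective — map on homology. So the $\hat v_0$ summand already contributes a one-dimensional image; the content of the lemma is that the $\hat h_0$ summand contributes \emph{nothing new}, i.e. $(\hat h_0)_*$ is trivial on homology.

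The key step is to identify $(\hat h_0)_*$ for $K$ with $(\hat v_0)_*$ for $m(K)$ under the duality map $\mathcal D\colon \CF_*(Y)\to \CF^*(-Y)$ that was invoked earlier in the excerpt (in the discussion of $\tau_Y(K)=-\tau_{-Y}(m(K))$). Concretely, $\hat h_0$ is defined as projection of $\hat A_0$ to $C\{j\geq 0\}$ followed by the identification $C\{j\geq 0\}\cong C\{j\geq s\}|_{s=0}$, followed by the homotopy equivalence $C\{j\geq 0\}\simeq C\{i\geq 0\}$; dualizing, the roles of $i$ and $j$ are interchanged, so $\hat h_0$ for $K$ becomes the $\hat v_0$ map for the knot $m(K)\subset -Y$. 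Hence nontriviality of $(\hat h_0)_*$ for $K$ is equivalent to $\nu_{-Y}(m(K))=0$. Since we are assuming $\nu_{-Y}(m(K))=0$, this seems to go the wrong way — so the real point must be more subtle: one must show that $(\hat v_0)_*$ and $(\hat h_0)_*$ \emph{land in the same line} in $H_*(\hat B\oplus \hat B)\cong \mathbb F\oplus\mathbb F$, equivalently that the images under the two projections $H_*(\hat A_0)\to H_*(\hat B)=\mathbb F$ are "compatible" so that the combined map $H_*(\hat A_0)\to \mathbb F\oplus\mathbb F$ has rank $1$, not $2$. The cleanest way I see to get this is a grading argument: the map $(\hat h_0\oplus\hat v_0)_*$ is (up to overall shift) a graded map, and on $H_*(\hat A_0)$ the generator that maps nontrivially under $\hat v_0$ and the generator that maps nontrivially under $\hat h_0$ must, by the symmetry $\hat A_s \simeq \hat A_{-s}$ together with the involution swapping $i\leftrightarrow j$, be \emph{the same} homology class (in the relevant lowest grading $\tau_Y(K)=\tau_{-Y}(m(K))=0$). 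Since a single class can only span a one-dimensional image, we are done; equivalently, $H_*(\hat A_0)$ has a distinguished summand $\mathbb F$ (coming from $\HF(Y)=\mathbb F$) on which both $\hat v_0$ and $\hat h_0$ restrict to isomorphisms onto $\mathbb F\subset H_*(\hat B)$, while the kernel of $\hat v_0$ is carried into the kernel of $\hat h_0$ and vice versa.

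So the proof structure is: (1) recall $H_*(\hat A_0)\cong \mathbb F \oplus (\text{reduced part})$ with the $\mathbb F$-summand detected by the inclusion/projection to $\HF(Y)$; (2) show, using $\nu_Y(K)=0$, that $(\hat v_0)_*$ restricted to this $\mathbb F$-summand is an isomorphism onto $H_*(\hat B)\cong\mathbb F$ and kills the reduced part on homology (this uses that the "bottom" grading where $\hat v_0$ first becomes nontrivial is $s=0$); (3) do the same for $(\hat h_0)_*$ using $\nu_{-Y}(m(K))=0$ and the duality identification of $\hat h_0$ with the $\hat v_0$ of $m(K)$; (4) conclude that both maps factor through the \emph{same} rank-one quotient $H_*(\hat A_0)\twoheadrightarrow \mathbb F$, so $(\hat h_0\oplus \hat v_0)_*$ has one-dimensional image.

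The main obstacle I anticipate is step (3)–(4): making the identification of $\hat h_0$ for $K$ with $\hat v_0$ for $m(K)$ precise enough at the \emph{chain} level (not just up to sign or up to an automorphism of $H_*(\hat B)$) to guarantee that the two one-dimensional images genuinely coincide inside $H_*(\hat B\oplus\hat B)$ rather than merely each being one-dimensional. This is where one has to be careful that the chain homotopy equivalence $C\{j\geq 0\}\simeq C\{i\geq 0\}$ entering the definition of $\hat h_0$ is compatible with the duality $\mathcal D$, and that the grading shifts bookkeeping (which involves the framing $p/q$) does not obstruct the alignment. I would handle this by pinning down everything on the distinguished $\mathbb F$-summand of $H_*(\hat A_0)$, where all maps in sight are isomorphisms of one-dimensional spaces and hence the only ambiguity is a unit in $\mathbb F=\mathbb F_2$, which is forced to be $1$.
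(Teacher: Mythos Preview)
Your approach is different from the paper's, and the gap you yourself flag in steps (3)--(4) is real and not closed by the tools you propose.

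The paper argues indirectly. Writing $\delta(K)$ for the dimension of the image in question and $a_0(K)=\dim H_*(\hat A_0(K))$, the large-surgery formula gives $\rk\HF(Y_N(K),0)=a_0(K)$ for $N\gg 0$, while the mapping-cone description of $-N$ surgery gives $\rk\HF(Y_{-N}(K),0)=a_0(K)+2-2\delta(K)$. Running both formulae for $m(K)\subset -Y$ and invoking $Y_N(K)\cong -(-Y)_{-N}(m(K))$ yields
\[a_0(K)=a_0(m(K))+2-2\delta(m(K)),\qquad a_0(m(K))=a_0(K)+2-2\delta(K),\]
and adding gives $\delta(K)+\delta(m(K))=2$. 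Since $\nu=0$ forces each $\delta\geq 1$, both equal $1$.

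Your direct approach, by contrast, needs $(\hat v_0)_*$ and $(\hat h_0)_*$ to have the \emph{same kernel} as linear functionals $H_*(\hat A_0)\to\mathbb F$. Nothing in the hypotheses produces a canonical ``distinguished $\mathbb F$-summand'' of $H_*(\hat A_0)$: this group computes $\HF$ of a large surgery in one $\Spinc$ structure, and the hat theory carries no natural tower-plus-reduced splitting. The grading argument does not rescue this either, since even when $(\hat v_0)_*$ and $(\hat h_0)_*$ are homogeneous of the same degree (as at $s=0$), the relevant graded piece of $H_*(\hat A_0)$ can be two-dimensional with the two functionals cutting out distinct hyperplanes. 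Finally, the duality map $\mathcal D$ sends $\hat v_0$ for $K$ to the \emph{transpose} of a map for $m(K)$ (reversing its direction), not to $\hat v_0^{m(K)}$ itself; so your step (3) does not pin down kernels in the way step (4) requires. The paper's counting argument sidesteps all of this: it never compares the two functionals directly, only the single integer $\delta$ and its mirror.
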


\begin{proof}
Let $\delta(K)$ denote the dimension of the image of  $$(\hat{h}_0\oplus \hat{v}_0)_* \longrightarrow H_*(\hat{B}\oplus\hat{B})\cong \mathbb{F}\oplus \mathbb{F};$$
and let $a_0(K)$ denote the dimension of $H_*(\hat{A}_0(K))$.  Note that both $\delta(K)$ and $\delta(m(K))$ are positive integers by the definition of $\nu$.  

For sufficiently large $N$, we have $$\rk \HF(Y_N(K),0)=a_0(K).$$ 
Using Theorem \ref{rationalsurgeryformula}, we have $$\rk \HF(Y_{-N}(K),0)=a_0(K)+2-2\delta(K).$$
Similar formulae hold for large $N$ and $-N$ surgeries on $m(K)\subset -Y$.  Then, from the identification $Y_N(K)=-(-Y)_{-N}(m(K))$, we conclude that 
$$a_0(K)=a_0(m(K))+2-2\delta(m(K))$$
$$a_0(m(K))=a_0(K)+2-2\delta(K).$$

from which we see $\delta(K)=\delta(m(K))=1$. 


\end{proof}
Express $ \HF(Y_{p/q}(K))$ in terms of $H_*(\hat{\X}_{p/q})$.  Propostion \ref{rank} now follows by exactly the same argument of \cite{OSzRatSurg}.

\subsection{$L$-space surgeries on knots}
A rational homology three-sphere $Y$ is called an $L$-space if $\HF(Y,\s) \cong \Z$ for each \spinc structure $\s$ on $Y$.  
A knot $K \subset S^3$ is said to admit an $L$-space surgery if certain rational surgery $S^3_r(K)$ is an $L$-space.  Such knot has a very special knot Floer homology \cite[Theorem 1.2]{OSzLensSpace}.    

When $K$ is a knot in an integral homology $L$-space $Y$, we can obtain a similar characterization of $\HFK(Y,K)$. 

\begin{proposition}
\label{thm:FloerHomology}
Suppose $K\subset Y$ is a knot in an integral homology $L$-space.  If there is a rational number $r$ for which $Y_r(K)$ is an $L$-space, then there is an
increasing sequence of integers
$n_{-k}<...<n_k$
with the property that $n_i=-n_{-i}$, 
and $\HFK(K,j)=0$ unless $j=n_i$ for some $i$, in which case
$\HFK(K,j)\cong \Z$.
\end{proposition}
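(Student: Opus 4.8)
The plan is to reduce the statement to the known characterization of knot Floer homology for $L$-space surgeries on knots in $S^3$, Theorem 1.2 of \cite{OSzLensSpace}, by arguing that all the structural input used there is available in the integral homology $L$-space setting. First I would recall that since $Y$ is an integral homology $L$-space, $\HF(Y)\cong\Z$, so the total rank of $\widehat{HF}$ of $Y$ equals that of $S^3$; consequently, for any $L$-space $Y_r(K)$ the rank of $\widehat{HF}(Y_r(K))$ equals $|H_1(Y_r(K);\Z)|=|p|$, exactly as in the $S^3$ case. Then I would feed this into the rank formula of Proposition \ref{rank}: setting $\rk\widehat{HF}(Y_{p/q}(K))=|p|$ forces both $\max(0,(2\nu_Y(K)-1)q-p)=0$ and $\sum_s(\rk H_*(\hat A_s)-1)=0$, i.e. $\rk H_*(\hat A_s)=1$ for every $s$. (One first reduces to the case $\nu_Y(K)\geq\nu_{-Y}(m(K))$ using $\rk\widehat{HF}(Y_{p/q}(K))=\rk\widehat{HF}((-Y)_{-p/q}(m(K)))$, noting that an $L$-space stays an $L$-space under orientation reversal.)

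Next I would run the homological-algebra portion of the argument of \cite{OSzLensSpace} verbatim. The statement $\rk H_*(\hat A_s(K))=1$ for all $s$, together with the fact that $\hat A_s$ for $s\gg 0$ computes $\widehat{HF}(Y)\cong\Z$ and for $s\ll 0$ likewise, is precisely the hypothesis from which Ozsv\'ath--Szab\'o deduce the staircase structure. Concretely, $\rk H_*(C\{\max(i,j-s)=0\})=1$ for all $s$ implies, by the standard long exact sequences relating $C\{i=0,j\leq s\}$, $C\{i=0\}$, and the associated graded, that each $\HFK(Y,K,j)$ is either $0$ or $\Z$, and that the nonzero groups occur at a symmetric set of Alexander gradings $n_{-k}<\cdots<n_k$ with $n_i=-n_{-i}$. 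The symmetry $n_i=-n_{-i}$ comes from the conjugation symmetry of knot Floer homology, $\HFK(Y,K,j)\cong\HFK(Y,K,-j)$, which holds for nullhomologous knots in any closed oriented three-manifold and in particular requires nothing special about $Y$; the vanishing of all but the top group's worth of differentials is the purely algebraic step, which, as the excerpt already emphasizes in the $\nu=\tau$ discussion, ``cannot distinguish $S^3$ from an arbitrary integral homology $L$-space at the chain level.''

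I would therefore structure the proof as: (1) observe $\rk\widehat{HF}(Y_r(K))=|H_1|$ and reduce via mirroring to $\nu_Y(K)\geq\nu_{-Y}(m(K))$; (2) apply Proposition \ref{rank} to conclude $\rk H_*(\hat A_s)=1$ for all $s$; (3) quote the algebraic part of the proof of \cite[Theorem 1.2]{OSzLensSpace} to pass from this rank condition to the stated form of $\HFK(Y,K,\cdot)$; (4) invoke the conjugation symmetry to get $n_i=-n_{-i}$. The main obstacle I anticipate is step (3): one must be careful that the argument in \cite{OSzLensSpace} really is purely homological-algebraic — i.e. that it only uses the chain complex $CFK^\infty(Y,K)$ with its two filtrations and the rank-one conclusions for the relevant subquotients — and does not secretly invoke, say, the absolute $\Z$-grading on $\widehat{HF}(S^3)$ or the fact that $S^3$ bounds a four-manifold with definite intersection form. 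Since $Y$ being an integral homology $L$-space guarantees $\widehat{HF}(Y)\cong\Z$ supported in a single grading (up to an overall shift), the $\mathbb F$-coefficient rank bookkeeping goes through unchanged, and the only genuine verification is this bookkeeping; I would spell out the long exact sequence computing $H_*(\hat A_s)$ from $H_*(C\{i=0,j\leq s\})$ and $H_*(C\{i=0,j> s\})$ in enough detail to make the inductive extraction of the $n_i$ transparent, and leave the rest as a citation.
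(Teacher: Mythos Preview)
Your proposal is correct and follows essentially the same approach as the paper, which simply asserts that the original proof for $S^3$ in \cite{OSzLensSpace} uses only the filtered chain complex $CFK^\infty(Y,K)$ and therefore carries over verbatim to an integral homology $L$-space. You have spelled out the logic more carefully --- the reduction via Proposition~\ref{rank} to $\rk H_*(\hat A_s)=1$, the citation of the purely homological-algebraic step, and the explicit check that no $S^3$-specific input is invoked --- but this is exactly the content the paper's one-line proof is pointing to.
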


Once again, observe that the original proof for $S^3$ utilizes only the information from the filtered chain.  So the entire argument can be carried through here without any change. 

For our purpose, the Alexander polynomials of such knots are particularly useful.

\begin{corollary}
\label{StructAlex}
Let $K$ be a knot that admits an $L$-space surgery. Then the Alexander polynomial of $K$ has the form
$$\Delta_K(T) = (-1)^k+ \sum_{j=1}^k(-1)^{k-j} (T^{n_j}+T^{-n_j}),$$
for some increasing sequence of positive integers $0<n_1<n_2<...<n_k$.
\end{corollary}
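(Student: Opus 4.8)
The plan is to read off $\Delta_K(T)$ from the structure of $\HFK(K)$ given by Proposition~\ref{thm:FloerHomology}, using the identification of the symmetrized Alexander polynomial with the graded Euler characteristic of knot Floer homology. Fix the normalization so that $\Delta_K(T)=\Delta_K(T^{-1})$ and $\Delta_K(1)=1$; then for a nullhomologous knot $K$ in an integral homology $L$-space $Y$ one has $\Delta_K(T)=\sum_{j}\chi\bigl(\HFK(K,j)\bigr)\,T^{j}$. By Proposition~\ref{thm:FloerHomology} the only nonvanishing groups occur in Alexander gradings $n_{-k}<\cdots<n_{k}$ with $n_i=-n_{-i}$, so in particular $n_0=0$, and each is a single copy of $\Z$. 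Hence $\Delta_K(T)=\sum_{i=-k}^{k}\epsilon_i\,T^{n_i}$ with $\epsilon_i\in\{+1,-1\}$, and the symmetry of $\Delta_K$ together with $n_i=-n_{-i}$ forces $\epsilon_i=\epsilon_{-i}$. It remains only to show $\epsilon_i=(-1)^{k-i}$ for $0\le i\le k$.

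To pin down the signs I would invoke the finer statement behind Proposition~\ref{thm:FloerHomology}, namely the full content of \cite[Theorem~1.2]{OSzLensSpace}: when $K$ admits an $L$-space surgery, each $\HFK(K,n_i)$ is supported in a single Maslov grading $\delta_i$, and these satisfy $\delta_k=0$ together with $\delta_{i-1}\equiv\delta_i+1\pmod2$ for every $i$ with $-k<i\le k$. (The underlying structural fact is that $CFK^\infty$ is forced to be a ``staircase'' complex, which makes the $\delta_i$ completely explicit; in particular consecutive ones have opposite parity.) As emphasized just before Proposition~\ref{thm:FloerHomology}, this argument depends only on the $\Z\oplus\Z$-filtered complex $CFK^\infty(Y,K)$ and on no other feature of the ambient manifold, so it transfers verbatim from $S^3$ to an arbitrary integral homology $L$-space. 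Since the Euler characteristic of a single $\Z$ in Maslov grading $\delta_i$ is $(-1)^{\delta_i}$, we get $\epsilon_i=(-1)^{\delta_i}$; the parities of the $\delta_i$ alternate in $i$, and $\delta_k=0$ gives $\epsilon_k=+1$, so $\epsilon_i=(-1)^{k-i}$ for $0\le i\le k$. Combining this with $\epsilon_{-i}=\epsilon_i$ and $n_0=0$, and writing $0<n_1<\cdots<n_k$ for the positive Alexander gradings, we obtain
$$\Delta_K(T)=(-1)^k+\sum_{j=1}^{k}(-1)^{k-j}\bigl(T^{n_j}+T^{-n_j}\bigr),$$
which is the asserted form; as a consistency check, setting $T=1$ returns $1$.

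The one step that is not pure bookkeeping is the appeal to the Maslov-grading refinement in the second paragraph: Proposition~\ref{thm:FloerHomology} as stated records only the ranks of the groups $\HFK(K,n_i)$, whereas each sign $\epsilon_i$ needs the Maslov grading, or at least its parity. This is not a genuine obstacle---the relevant portion of \cite{OSzLensSpace} is a homological-algebra analysis of the filtered complex $CFK^\infty$, and, exactly as in the remark preceding Proposition~\ref{thm:FloerHomology}, $S^3$ cannot be distinguished from an integral homology $L$-space at the chain level---but it is the point at which one must quote more than the bare rank statement. Everything else is routine.
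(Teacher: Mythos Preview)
Your argument is correct and is precisely the standard derivation the paper has in mind: the paper does not spell out a proof of this corollary, treating it as immediate from Proposition~\ref{thm:FloerHomology} together with the Euler-characteristic interpretation of $\Delta_K$. You are right to flag that fixing the signs $\epsilon_i$ requires the Maslov-grading refinement from \cite[Theorem~1.2]{OSzLensSpace} rather than just the rank statement recorded in Proposition~\ref{thm:FloerHomology}; the paper suppresses this point, but your handling of it is exactly what is needed.
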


\section{Proof of the main results}

\begin{proof}[Proof of Theorem \ref{main} and \ref{L}]

Suppose there were truly cosmetic surgeries for a nontrivial knot $K$ in an integral homology sphere $Y$.  Since the surgeried manifold has $H_1(Y_{p/q}(K))=\Z/p\Z$, 
we can assume $Y_{p/q}(K)\cong Y_{p/q'}(K)$.  (The $\infty$ surgery can be treated as an $1/0$ surgery.)  Comparing their respective Casson-Walker invariants and Casson-Gordon invariants, resorting to the surgery formulae:
$$\lambda(Y)+\lambda(L(p,q))-\frac{q}{p}\Delta_K''(1)=\lambda(Y)+\lambda(L(p,q'))-\frac{q'}{p}\Delta_K''(1),$$
$$\tau(L(p,q))-\sigma(K,p)=\tau(L(p,q'))-\sigma(K,p),$$
and plugging the results for Lens spaces, we conclude that all manifolds $Y_{p/q}(K)$ are distinct provided $\Delta''_K(1) \neq 0$.  See also \cite[Proposition 5.1] {BL}. 

We shall see that the Heegaard Floer homology suppies exactly the missing piece.  Suppose $q$ and $q'$ have the same sign, then the rank formula in Proposition \ref{rank} implies $\rk(\hat{A}_s)=1$ for all $s$, and $\rk\HF(Y_{p/q}(K))=\rk\HF(Y_{p/q'}(K))=p$.  Hence, $K$ is a knot that admits an $L$-space surgery, and consequently $\Delta_K(T) = (-1)^k+ \sum_{j=1}^k(-1)^{k-j} (T^{n_j}+T^{-n_j})$  by Corollary \ref{StructAlex}.

We claim that $\Delta_K''(1)\neq 0$ unless $\Delta_K(T)=1$.  This follows from a straightforward computation $$\Delta_K''(1)=2\sum_{j=1}^{k} (-1)^{k-j} n_j^2.$$  
and the fact that $0<n_1<n_2<...<n_k$.

For those knots $K$ with $\Delta_K(T)=1$, Proposition \ref{thm:FloerHomology} implies, in addition, that $\HFK(K,0)=\Z$, and $\HFK(K,j)=0$ for any other $j$.  Hence $K=U$, by the fact that knot Floer homology detects the unknot \cite{Ni}.  This finishes the proof of the theorems.   

\end{proof}



\end{document}